\newtheorem{theorem}{Theorem}[section]
\newtheorem{axiom}[theorem]{Axiom}
\newtheorem{conjecture}[theorem]{Conjecture}
\newtheorem{corollary}[theorem]{Corollary}
\newtheorem{definition}[theorem]{Definition}
\newtheorem{proposition}[theorem]{Proposition}
\newtheorem{prop}[theorem]{Proposition}
\newtheorem{question}[theorem]{Question}
\newcommand{\Ce}{{\mathcal C}}
\newcommand{\Pe}{{\mathbb P}}
\newcommand{\Qu}{{\mathbb Q}}
\renewcommand{\emptyset}{\varnothing}
\numberwithin{equation}{section}
\newcommand{\MA}{\mathrm{MA}}
\title{Steel's Programme: Evidential Framework, the Core and Ultimate-$L$}
\author{Joan Bagaria,\footnote{Universitat de Barcelona, ICREA. bagaria@ub.edu; joan.bagaria@icrea.cat} \ Claudio Ternullo\footnote{Universitat de Barcelona. claudio.ternullo@ub.edu.}}
\date{\today}
\begin{document}

\maketitle

\begin{abstract}

We address Steel's Programme to identify a `preferred' universe of set theory and the best axioms extending $\mathsf{ZFC}$ by using his multiverse axioms $\mathsf{MV}$ and the `core hypothesis'. In the first part, we examine the evidential framework for $\mathsf{MV}$, in particular the use of large cardinals and of `worlds' obtained through forcing to `represent' alternative extensions of $\mathsf{ZFC}$. In the second part, we address the existence and the possible features of the core of $\mathsf{MV}_T$ (where $T$ is $\mathsf{ZFC}$+Large Cardinals). In the last part, we discuss the hypothesis that the core is Ultimate-$L$, and examine whether and how, based on this fact, the Core Universist can justify $V$=Ultimate-$L$ as the best (and ultimate) extension of $\mathsf{ZFC}$. To this end, we take into account several strategies, and assess their prospects in the light of $\mathsf{MV}$'s evidential framework. 
    
\end{abstract}

\section{Introduction}

\subsection{Steel's Programme}

In \cite{steel2014}, John Steel presented a version of the set-theoretic multiverse consisting of `worlds' (\textit{set-generic extensions} of $V$), and a collection of axioms for it, $\mathsf{MV}$, and also made the hypothesis that such a multiverse might contain a \textit{core}, that is, a world included in all other worlds, which would act as `preferred universe', as the `real $V$'.\footnote{\cite{steel2014}, p. 167ff. Further discussion of Steel's $\mathsf{MV}$ axioms may be found in \cite{maddy-meadows2020} (for the discussion of the `core hypothesis', see, in particular, sections 5-6).}  A few years after the appearance of \cite{steel2014}, in the context of research on \textit{set-theoretic geology},\footnote{For a review of some set-theoretic geological concepts, see section 3.1.} it was proved (in \cite{U:ECM}) that, if there exists an \textit{extendible} cardinal, then $V$ has a smallest ground, and that such smallest ground is the $\kappa$-mantle of $V$ itself, where $\kappa$ is the least   extendible cardinal. A noticeable consequence of Usuba's result is that the \textit{multiverse} of the theory $\mathsf{MV}_T$, where $T=\mathsf{ZFC}+$`there exists a proper class of extendible cardinals' has a \textit{core}. Now, given the presuppositions of $\mathsf{MV}$, in particular, its reliance upon the whole hierarchy of Large Cardinals (LCs), it makes full sense to investigate the features of the multiverse of $\mathsf{MV}_T$, in particular, the features of its core. 

Steel's `core hypothesis' also has connections with another, recently arisen, fundamental set-theoretic hypothesis, that is, Woodin's `Ultimate-$L$ Conjecture'.\footnote{\label{Woodin}For the bulk of all results on Ultimate-$L$, see \cite{woodin2017}. For a more accessible exposition, see \cite{woodin2011b}, pp. 457-69. See also section 5 of the present paper.} The inner model programme has progressively unveiled the existence of `canonical' inner models of $\mathsf{ZFC}$+LCs; Woodin's Ultimate-$L$ Conjecture asserts that $\mathsf{ZFC}$+LCs also proves the existence of a \textit{weak extender model} for a cardinal $\delta$ being supercompact which also satisfies $V$=Ultimate-$L$.\footnote{\label{supercomp} An uncountable cardinal $\kappa$ is $\gamma$-supercompact, for some $\gamma \in Ord$,  if and only if there is an elementary embedding $j: V \rightarrow M$, with critical point $\kappa$, such that $M^{\gamma} \subseteq M$. A supercompact cardinal $\kappa$ is a cardinal that is $\gamma$-supercompact for all $\gamma \in Ord$.} In turn, Steel has made the hypothesis that Ultimate-$L$ might be the most `suitable' candidate as the \textit{core} of $\mathsf{MV}$.\footnote{\cite{steel2014}, p. 169ff., in particular, section 6. See also footnote \ref{AxiomH}, and \cite{maddy-meadows2020}, pp. 148-49.} 

Now, given $\mathsf{MV}_T$, where, again, $T=\mathsf{ZFC}$+`there exists a proper class of extendible cardinals', Steel's Programme, as we will call it, may be formulated as follows: 

\vspace{11pt}

\noindent
\textbf{Steel's Programme}. Use facts about the core of $\mathsf{MV}_T$ as evidence for the following claims:

\begin{enumerate}

\item $V$ is the core.

\item The core is Ultimate-$L$.
    
\item $\mathsf{ZFC}$+LCs+$V$=Ultimate-$L$ has a better claim than any other theory to be seen as the best (ultimate) extension of $\mathsf{ZFC}$.\footnote{\label{steel}It should be noted that Steel's Programme, although `implicit' in \cite{steel2014}, is not openly formulated as such by Steel himself.}

\end{enumerate}

\noindent
As we shall see in more detail later on, the execution (and meaningfulness) of Steel's Programme crucially depends on the relationship between $\mathcal{L}_\mathsf{MV}$, the multiverse language, and $\mathcal{L}_\mathsf{\in}$, the language of set theory, as expressed by a `translation function', which, in turn, shows that $\mathcal{L}_\mathsf{MV}$ is a \textit{sublanguage} of $\mathcal{L}_\mathsf{\in}$. As a consequence of this, the existence of the core of the multiverse of $\mathsf{MV}$ has bearings on 
set theory, as `standardly' construed as the theory of $V$, thus, Steel's Programme makes full sense. In broader philosophical terms, the interaction between the two `languages' may be construed as a way to provide set-theorists with a strategy to respond to the following question:

\begin{question}\label{mainquestion}

Is there a `preferred' universe of sets? 

\end{question} 

Clearly, if the Programme is successful, then the response to this question is in the positive, a fact which would have a considerable impact on our understanding of set theory, in particular, of its foundations. 

The main purpose of the paper is to provide a comprehensive assessment of the Programme, by focussing on three main topics: (1) its `evidential framework': in particular, the use of LCs and of forcing extensions of $V$ as `worlds' in the $\mathsf{MV}$ axiomatic set-up (section 2); (2) the core: the assumptions needed for its existence, and its level of (in)determinacy over $\mathsf{MV}$ (section 3 and 4); (3) the hypothesis that Ultimate-$L$ is the core, and the justification for this claim (section 5). 

But first, we would like to introduce very briefly the general features of the broader philosophical context in which Steel's Programme may be discussed.

\subsection{Two Kinds of Universism}

$\mathsf{MV}$ locates itself in the current universe/multiverse debate, and, consequently, in the debate on \textit{pluralism}, that is, on the issue of whether mathematical truth splits into \textit{many}, mutually incompatible, truths.\footnote{For an overview of the debate, see \cite{koellnerxxx}, and \cite{afht2015}.} Pluralism is very often taken to correspond to `ontological' pluralism, that is, to the view that:

\vspace{11pt}

\noindent
\textbf{(Ontological) Pluralism}.  There are many, \textit{alternative} universes of set theory (there is a set-theoretic \textit{multiverse}).\footnote{This viewpoint is most prominently represented by \cite{hamkins2012}.}  

\vspace{11pt}

\noindent
But note that some pluralists would just commit themselves to semantic pluralism, that is, to the view that the \textit{truth-value} of all set-theoretic statements undecidable from the $\mathsf{ZFC}$ axioms is indeterminate (neither true nor false). The opposite camp is represented by:

\vspace{11pt}

\noindent
\textbf{(Ontological) Non-Pluralism}. There is just \textit{one} universe of set theory.\footnote{For arguments against set-theoretic pluralism, see \cite{martin2001} and \cite{koellnerxxx}; for the potential skeptic consequences of multiversism, see \cite{button-walsh2018}, pp. 206-213, and \cite{barton2016}.}

\vspace{11pt}

\noindent
In what follows, we will mostly refer to the position above as:

\vspace{11pt}

\noindent
\textbf{Classic Universism}. Set theory is the theory of a \textit{single} universe, $V$, whose features are pinned down by the $\mathsf{ZFC}$ axioms (and, potentially, by other axioms extending $\mathsf{ZFC}$).\footnote{Although, as pointed out by a reviewer, it is consistent with the Universist position that the features of the `real $V$' might be, at least in part, unknowable.}  

\vspace{11pt}

\noindent
Steel himself introduces and considers several theses, of different nature and strength, concerning set-theoretic ontology and truth. The philosophical thesis which postulates the existence of the core of $\mathsf{MV}$'s multiverse is what Steel calls Weak Absolutism:

\vspace{11pt}

\noindent
\textbf{Weak Absolutism}. The multiverse has a core.\footnote{\cite{steel2014}, p. 168. On Steel's own classification, our Classic Universism corresponds to Strong Absolutism, whereas Ontological Pluralism, presumably, corresponds to Strong Relativism (which, however, isn't described by Steel). Weak Relativism is the thesis that the whole of $\mathcal{L}_\mathsf{\in}$ is `meaningful' in $\mathcal{L}_\mathsf{MV}$, a position which leaves it open whether there really exists a `reference universe' in $\mathcal{L}_\mathsf{MV}$.}

\vspace{11pt}

\noindent
Crucially for our purposes, Weak Absolutism quite naturally leads to the following stronger view:  

\vspace{11pt}

\noindent
\textbf{Core Universism}. Set theory is the theory of \textit{multiple} (set-theoretic) universes, that is, of a \textit{multiverse}, which also contains a \textit{core universe}. Such a universe has a better claim to be seen as the `ultimate universe of sets' than any other universe.

\vspace{11pt}

We believe that this position makes sense, especially in view of Steel's programme, insofar as, if the core of the multiverse of $\mathsf{MV}$ exists, then it makes sense to claim that $V$ is the core of such multiverse.

To state very quickly the main differences between Classic and Core Universism: the Classic Universist may be standardly characterised as someone believing that our \textit{intuition} of sets, or the `concept of set' itself, will provide us with a \textit{unique}, consistent extension of the $\mathsf{ZFC}$ axioms which will \textit{uniquely} fix the truth-value of the undecidable statements.\footnote{For a classic articulation of this position, see \cite{godel1947}.} 
By contrast, the Core Universist may be characterised as someone who takes all alternative `universes' to be equally legitimate; however, such a Universist will also hold that each \textit{universe} (or, if you wish, \textit{theory}) contains `traces' of a single, `preferred' universe, and much of the value of the position consists in showing that the claim is true, that is, that a core universe is really \textit{detectable} within the multiverse itself. Of course, the Core Universist also expects to be able to describe the properties of the core in a satisfactory way.

In order to attain a reduction of set-theoretic incompleteness, the Classic Universist will suggest further exploration of our intuitions about sets, or sharpening of the concept of set, whereas the Core Universist will suggest further exploration of the properties of the core through the multiverse axioms. 

Now, it is clear that Steel's Programme advocates the Core Universist's standpoint, and, therefore, has deep implications on the preferability of Core over Classic Universism: as already said, the Programme, if successful, would lend support to Core Universism. Indeed, the Core Universist's construal of the Programme's goals and results could be condensed as follows: `Non-pluralism about set-theoretic ontology cannot be correct, as we are aware of the existence of many alternative universes (as well as of alternative theories extending $\mathsf{ZFC}$). However, given a suitable version of the multiverse, one resting upon significant bits of current set-theoretic practice (upon the `evidential framework' addressed in the next section), we may identify a `preferred' universe within the multiverse itself. But then, for all our foundational purposes, such a universe might be seen as a fully adequate instantiation of our pre-theoretic notion of `single universe', perhaps not exactly the Classic Universist's one, indeed, a more `pragmatic', but equally justified, version of it.' 

The Weak Absolutist's position has already been vindicated by Usuba's result: if there are sufficiently strong LCs, then $\mathsf{MV}$ has a core. However, whether, and in what sense, the core could be seen as the `ultimate' (`preferred') universe of sets, as claimed by the Core Universist, namely as the equivalent of the Classic Universist's $V$, is still open to debate, as we shall see in the next sections.

\section{The Evidential Framework of $\mathsf{MV}$}

In the preliminaries of \cite{steel2014}, Steel introduces and advocates a few distinctive positions, which he takes to be the main motivation for, and underlying conceptual framework of, his multiverse conception, as embodied by the $\mathsf{MV}$ axioms. We summarise them below: 

\begin{enumerate}

\item Large Cardinals are `practically' \textit{necessary} to extend $\mathsf{ZFC}$, mostly as a consequence of the following two phenomena:

\begin{enumerate}

\item LCs `calibrate' the consistency of extensions of $\mathsf{ZFC}$, and of  \emph{most} undecidable set-theoretic statements;

\item LCs maximise \textit{interpretative power}, insofar as the hierarchy of consistency strengths of Large Cardinal Axioms (LCAs) is very aptly matched by the hierarchy of proof-theoretic strengths of extensions of $\mathsf{ZFC}$.\footnote{\label{determinacy}\cite{steel2014}, p. 162, also points out that additional confirmation of the `correctness' of LCs is provided by the multiple correspondences between these and Determinacy Axioms; in turn, the successfulness of Determinacy Axioms would be demonstrated by their usefulness and fecundity in many areas of mathematical research. For an overview of the connections between Determinacy and LCs, see \cite{maddy1988b} and \cite{koellner2014}.}

\end{enumerate}

\item Set theory should fundamentally be seen as the theory of the \emph{forcing extensions} and \emph{inner models} of models of the theory: $\mathsf{ZFC}+$LCAs. This position, in turn, rests upon the fact that independence proofs are practically carried out in the model theory of (fragments of) $\mathsf{ZFC}+$LCAs and, most crucially for Steel's purposes, also upon the fact that all `natural' theories extending $\mathsf{ZFC}$ may be mutually connected through using models with LCs.\footnote{It should be noted that Steel's theory leaves out many inner models which do not satisfy strong LCs: e.g., since $L$ does not have those cardinals, no model satisfying $\mathsf{ZFC}+$V=L can be a world of $\mathsf{MV}$. We thank an anonymous reviewer for raising this point. However, this does not mean that these models (and theories) cannot be defined inside any of $\mathsf{MV}$'s worlds: see section 2.1.}

\item Set theory, as currently practised and interpreted, splits into several `natural' theories, all of which extend $\mathsf{ZFC}$. Part of the rationale for adopting set theory as a foundation of mathematics, thus, consists in describing all such theories and their connections through exploiting, in turn, connections between LCs and models containing them.\footnote{\label{steel} The point is made by Steel in section 5 of \cite{steel2014}, p. 164. However, in private correspondence with the authors (26/10/2020), Steel has understated this fact as a motivation for $\mathsf{MV}$, and placed a lot more emphasis on the existence of the translation function from $\mathcal{L}_{\mathsf{MV}}$ to $\mathcal{L}_{\in}$ as the main rationale for formulating $\mathsf{MV}$.}

\item In order to describe all such theories, one should make use of \textit{forcing}. To be sure, models obtained through forcing have become an essential tool to produce a wide range of set-theoretic `universes' upholding or violating set-theoretic principles, and, in particular,  models obtained through forcing are used to prove the \textit{equiconsistency} of theories with LCs. So, in practice, \textit{forcing extensions} of the universe should be taken to stand for different universes (Steel's \textit{worlds}).\footnote{However, notice that forcing extensions of $V$ may produce violations of LCs. So, if one wants to preserve LCs uniformly, one should rule out several models obtained through different kinds of \textit{class forcing}. Cf. \cite{steel2014}, p. 167.} 

\end{enumerate}

In the next subsections, we wish to discuss at length salient aspects of the points above and, through this, provide an assessment of the evidence for the $\mathsf{MV}$ axioms.

\subsection{Natural Theories, Large Cardinals, and Worlds}\label{2.1}

As stated in bullet point (3.) above, one of the purposes of Steel's multiverse conception is precisely that of `representing' all `natural' theories extending $\mathsf{ZFC}$ within a \textit{unified} axiomatic framework, without explicitly and directly having to deal with `universes'.  

However, in order to show that the axioms are not \textit{semantically} vacuous, there will have to be objects fixing their interpretation, and such objects are, on the one hand, \textit{sets} and, on the other, \textit{worlds}\footnote{\cite{steel2014}, p. 165.}. But notice the peculiar use of worlds in $\mathsf{MV}$: the latter are introduced, and described, to account for the \textit{representability} of \textit{different} theories.\footnote{Both \cite{maddy2017}, pp. 310ff., and \cite{maddy-meadows2020}, pp. 123-4 stress the importance of this fact, by contrasting Steel's conception with those of Hamkins and Woodin, which would, on the contrary, openly commit to significantly more \textit{ontological} forms of multiversism, and to a `metaphysics of universes' from the beginning.}  The strategy is, crucially, facilitated by both proof-theoretic and model-theoretic facts concerning LCs, in particular, the following conjecture:

\begin{conjecture}\label{Con1} 

Any `natural' extension of $\mathsf{ZFC}$ is either equiconsistent with $\mathsf{ZFC}$ or equiconsistent with $\mathsf{ZFC}+A$, where $A$ is a LCA. Moreover, the consistency strengths of `natural' extensions of $\mathsf{ZFC}$ are well-ordered.\footnote{\label{nat}The qualifier `natural' in the statement of Conjecture \ref{Con1} is necessary, for there are (contrived, hence `unnatural') examples of consistent sentences $\varphi$ whose consistency strength is strictly stronger than $\mathsf{ZFC}$ (meaning that $\mathsf{ZFC}$ does not prove that $Con(\mathsf{ZFC})$ implies $Con(\mathsf{ZFC} + \varphi)$), yet $\mathsf{ZFC} + \varphi$ does not even prove the consistency of $\mathsf{ZFC} + Con(\mathsf{ZFC})$. Hence the consistency of $\mathsf{ZFC} + \varphi$ does not even yield the consistency of $\mathsf{ZFC}+$`There exists an inaccessible cardinal'. }

\end{conjecture}

The main upshot of Conjecture \ref{Con1} is that, since all theories $\mathsf{ZFC}$+LCA considered so far are arranged in a well-ordered scale of consistency strengths,\footnote{Pending open questions about comparability or equivalence of the consistency of certain large-cardinal notions (e.g., the equiconsistency of \textit{supercompact} and \textit{strongly compact cardinals}), which presumably will eventually be resolved.} then also all natural theories are. Thus, the `invisible web' binding together all natural theories is, finally, cast into sharp relief by the proof-theoretic connections among all LCAs. 

The next step is to exploit the fact that, using LCs, one is able to construct models, in particular \textit{set-forcing extensions} and \textit{inner models}, which satisfy any natural theory. Using this fact, and Conjecture \ref{Con1}, Steel is, therefore, able to formulate, in full, the main meta-theoretic constraint presiding over his `multiverse' (bullet point (2.) above):

\vspace{11pt}

\noindent
\textbf{Meta-Theoretic Constraint}. $\mathsf{MV}$'s worlds are just those models of $\mathsf{ZFC}$+LCA which are needed to incorporate all natural theories extending $\mathsf{ZFC}$.

\vspace{11pt}

Finally, since inner models may well be defined inside forcing extensions, what we just need to characterise worlds are LCs and forcing: the $\mathsf{MV}$ axioms reflect this state of affairs, by quantifying over set-forcing extensions (and their grounds) with LCs. 

We will deal with the models of $\mathsf{MV}$ later on, but, first, we wish to express several concerns about Steel's use of, and primary reliance upon, LCs, and about the notion of `naturalness'.  

The first qualm refers to Conjecture \ref{Con1}, that all `natural' extensions of $\mathsf{ZFC}$ are equiconsistent with either $\mathsf{ZFC}$ or some LCA. While no evidence against it has been found thus far, the conjecture is far from being settled, and things are compounded by the fact that we lack a general definition of `large cardinal'. Now, it would certainly be unfair to have the \textit{unsharpness} of the notion count against Steel's project; however, the absence of ultimate evidence that any undecidable statement will turn out to be equiconsistent with a LCA is a fact, which casts some doubts on the tenability of the conjecture. 

A second, possibly more malignant, issue is that the notion of `natural theory' is unclear. What we are told by Steel, at the very outset, is that a set-theoretic statement does qualify as `natural' if it is consistent with $\mathsf{ZFC}$, asserts some `facts' about sets, and is not of a metamathematical or proof-theoretic nature, but this is really not much.\footnote{Cf. \cite{steel2014}, p. 157: `By `natural' we mean considered by set theorists, because they had some set-theoretic idea behind them.'} Moreover, $\mathsf{MV}$'s scope for `naturalness' is too restrictive from the beginning, as it leaves out theories, such as $\mathsf{ZF}+$AD, which unquestionably express deep set-theoretic facts.


As an interpretative option, one could define a set-theoretic statement $\varphi$ to be `natural' if $\mathsf{ZFC}$ plus some LCA proves that $\varphi$ holds in a forcing extension of $V$, or in some definable (allowing for set parameters) inner model of $\mathsf{ZFC}$ of a forcing extension of $V$ (including definable, with set parameters, class forcing extensions that preserve $\mathsf{ZFC}$). By this interpretation, all $\mathsf{ZFC}$ axioms as well as all LCAs are natural, and so are CH, V=L, V=HOD, SCH, as well as their negations (in fact, all \textit{genuinely} set-theoretic statements known to be consistent with $\mathsf{ZFC}$ and asserting some facts about sets). Moreover, in this case, even theories contradicting Choice, like $\mathsf{ZF}$+AD, would now be seen as `natural', as they are also equiconsistent with $\mathsf{ZFC}$+LCAs.\footnote{However, see the considerations in fn. \ref{nat}; of course, also this interpretation of `naturalness' will have to rule out such statements as $\mathsf{ZFC}+Con(\mathsf{ZFC})$.}  

This interpretation seems to fit Steel's goals, but, surely, `equiconsistency with $\mathsf{ZFC}$ or $\mathsf{ZFC}$+LCAs' does not seem to square very well with the common-sense, intuitive meaning of `naturalness'. Moreover, one could easily reverse this approach, consider a theory natural if it can be expressed in models of $\mathsf{ZF}$+LCAs, which also allow for the existence of even stronger LCs, such as Reinhardt and Berkeley Cardinals, and `incorporate' theories with Choice in models of $\mathsf{ZF}$+LCAs.\footnote{\label{LCBC}For Reinhardt and Berkeley cardinals, see \cite{bagaria-koellner-woodin2019}, sections 2-3, pp. 289-296. It should be noted that it is presently not known whether Choiceless Large Cardinals are consistent with $\mathsf{ZF}$. If consistent, these LCs would have a dramatic impact on other set-theoretic hypotheses, such as the Ultimate-$L$ Conjecture, and the HOD Dichotomy Hypothesis. In particular, they would imply that HOD is `far' from $V$, so, in a sense, they may also be taken to have noticeable \textit{maximising} virtues (so long as $V$=HOD is construed as a limiting hypothesis). Reinhardt cardinals may be seen as instantiating a different form of maximality, as they imply a `maximisation' of the level of resemblance of $V$ with itself in the non-trivial embedding $j: V \rightarrow V$ which is used for their definition. For further details and philosophical considerations on all these issues see, again, \cite{bagaria-koellner-woodin2019}, section 8, p. 309ff.}


\subsection{Large Cardinals as Maximality Principles}

Steel's preference for LCs is also motivated by another principle, that he calls `maximise interpretative power', which might be seen as consisting of two parts:

\vspace{11pt}

\noindent
\textbf{Maximise Interpretative Power [MIP]}. (A.) The $\mathsf{MV}$ axioms should be able to `represent' as many theories (`natural extensions of $\mathsf{ZFC}$') as possible; (B.) all the theories represented by the $\mathsf{MV}$ axioms should be such that, for any two of them, $T$ and $S$, if $Con(T) \rightarrow Con(S)$, then $\Gamma_S \subseteq \Gamma_T$ (where, given a theory $T$, $\Gamma_T=\{\phi: T \vdash \phi\}$).\footnote{Cf. \cite{steel2014}, pp. 158-9; 165.} 

\vspace{11pt}

\noindent
LCAs constitute a paradigmatic case study with respect to \textbf{MIP}. As far as (A.) is concerned, we have seen that each `natural' theory $T$ is \textit{satisfied} in a forcing extension or inner model of another `natural' theory $S$, provided both $T$ and $S$ are equiconsistent with LCAs; as regards (B.), we know that the amount of mutual \textit{interpretability} of natural theories rises in proportion with their consistency strength. Thus, in the end, it would be legitimate to expect that:

\vspace{11pt}

\noindent
(*) As natural theories proceed up the large cardinal hierarchy in consistency strength, they agree on an ever-increasing class of mathematical statements.\footnote{Cf. \cite{maddy-meadows2020}, p. 128, from which (*) has been verbatim reproduced.} 

\vspace{11pt}

\noindent
A few observations are in order. The first one, concerning (B.), is that, so far, \textbf{MIP} has been shown to be satisfied by LCAs only partially, that is, for specific kinds of sentences in the L\'evy hierarchy of arithmetical sentences. The following is the most one can hope to prove so far:

\vspace{11pt}

\noindent
\textbf{Empirical Fact.}
For any two natural theories $T, S$ whose consistency strength is at least that of the theory: $\mathsf{ZFC}$+`there exist infinitely many Woodin cardinals', such that $Con(T) \rightarrow Con(S)$, we have that $({\Pi^{1}_\omega})_S \subseteq ({\Pi^{1}_\omega})_T$ (where, given a theory $T$, $({\Pi^{1}_\omega})_T$ is the set of $\Pi^{1}_{\omega}$ sentences provable in $T$).  

\vspace{11pt}

As a consequence, the applicability of \textbf{MIP} to LCAs has only been verified up to the level of second-order arithmetic.  

The second observation is as follows. Steel's purpose is that of \textit{representing} a multiplicity of theories, all of which extend $\mathsf{ZFC}$+LCs, and we wonder whether \textbf{MIP} is really compatible with this goal. For suppose (B.) were applicable to \textit{all} sentences in the L\'evy hierarchy; then, clearly, all the theories targeted by (A.), would not, just, be \textit{represented}, but, for all practical purposes, they would rather be \textit{amalgamated} into just one theory.\footnote{That \textbf{MIP} may really extend beyond $\Pi^{1}_{\omega}$ sentences, as the consistency strength of LCAs further increases, is very speculative (that it may extend to \textit{all} sentences of the L\'evy hierarchy is even less plausible). In particular, as pointed out by an anonymous reviewer, it is very doubtful that it might extend to the level of $\Sigma^{2}_{1}$ statements, that is, to the level of CH, although Steel conjectures that this could be the case, cf. \cite{steel2014}, p. 163.} One way out of this difficulty would be to see (A.) as being sanctioned by the presently limited range of applicability of (B.), but this wouldn't help fully ease the tension between (A.) and (B.).

Finally, although this is very speculative, the hierarchy of LCs contradicting Choice might, potentially, be more successful at instantiating \textbf{MIP} than the hierarchy of LCs with Choice, but, as we have seen, none of these theories features among those targeted by $\mathsf{MV}$.\footnote{For other forms of maximality Choiceless LCs may potentially embody, see the considerations in footnote \ref{LCBC}.}

\subsection{Models for $\mathsf{MV}$}\label{MV}

A major asset of $\mathsf{MV}$ is that this theory is complete with respect to a specific class of models $M^{G}$, with which we will deal in a moment, that is, one has that:
\[\mathsf{MV} \vdash \phi \leftrightarrow (\forall M^{G}) \ M^{G} \models \phi \]  We will not delve into the philosophical reasons for preferring a \textit{complete} axiomatisation of the multiverse over one which is not, as the task has already been carried out satisfactorily.\footnote{\label{mvcomplete}\cite{maddy-meadows2020}, pp. 137ff. The proof of the completeness of $\mathsf{MV}$ (Theorem 8) is on p. 158.} In this subsection, we would rather like to focus on the semantics of $\mathsf{MV}$, in particular, on its `natural' models, $M^G$,\footnote{See further, paragraph below the enunciation of the $\mathsf{MV}$ axioms.}
and bring to light a slightly different mathematical approach to it (Proposition \ref{homo}).   

We start with reviewing, very quickly, the axioms. The language of $\mathsf{MV}$ is the first-order language of set theory with two sorts, namely \emph{Set} and \emph{World}. We introduce a minor tweak to Steel's original formulation of the axioms. As we already know, $\mathsf{MV}$ has, as its own base, the axioms of $\mathsf{ZFC}$ plus LCs (we shall mostly refer to the \textit{base theory} as $T$). Now, let a $T$-theory be a theory extending $\mathsf{ZFC}$ which is preserved by set-forcing extensions, and by going to set-forcing grounds: it turns out that any theory of the form $\mathsf{ZFC}$+`there exists a proper class of some kind of LCs' is a $T$-theory. Then, the $\mathsf{MV}$ axioms for $T$ $(\mathsf{MV}_T)$, which we will be mostly referring to and using throughout the paper, are the following ones:

\begin{enumerate}
\item (Extensionality for Worlds) If two worlds have the same sets, then they are equal.
\item Every world is a model of $T$.
\item Every world is a transitive proper class. An object is a set if and only if it belongs to some world. All worlds have the same ordinals.
\item If $W$ is a world and $\Pe \in W$ is a poset, then there is a world of the form $W[G]$, where $G$ is $\Pe$-generic over $W$.
\item If $U$ is a world and $U=W[G]$, where $G$ is $\Pe$-generic over $W$, then $W$ is also a world.
\item (Amalgamation) If $U$ and $W$ are worlds, then there are posets $\Pe\in U$ and $\Qu\in W$, and sets $G$ and $H$ $\Pe$-generic and $\Qu$-generic over $U$ and $W$, respectively,  such that $U[G]=W[H]$.
\end{enumerate}

If $M$ is a countable model of $T$ and $G$ is $Coll(\omega, <Ord)^M$-generic over $M$, let $M^G$ be the model whose sets are those in $M[G]$ (in the case $M$ is ill-founded, then  $M[G]$ is defined accordingly) and whose worlds are the grounds of models of the form $M[G\restriction \alpha]$, for some $\alpha \in Ord^M$. It can be easily shown that $M^G$ is a model of $\mathsf{MV}_{T}$, when $T=\mathsf{ZFC}$.\footnote{\cite{maddy-meadows2020}, Theorem 26, p. 155.}  Moreover, if $T$ is obtained by adding to $\mathsf{ZFC}$ axioms such as `There is a proper class of $P$-cardinals', where $P$ stands for any of the usual large-cardinals properties, then it can also be proved that $M^G$ is a model of $\mathsf{MV}_{T}$.%

The collection of all models $M^G$ provides a complete semantics, and the axiom which guarantees the completeness of $\mathsf{MV}_T$ is, as has been shown by \cite{maddy-meadows2020}, p. 134, Amalgamation. But, as stressed by the authors, a consequence of this fact is that, in any model of the form $M^G$, not all generic filters for posets in $M$ may be taken into account to produce \textit{forcing extensions} which act, as required by the Meta-Theoretic Constraint of section \ref{2.1}, as the 'worlds' of $\mathsf{MV}$, but only those which are produced by the $Coll(\omega, Ord^M)$-generic filter $G$ over $M$. 

We now proceed to prove the following:

\begin{proposition}
\label{homo}
The $\mathsf{MV}_T$ axioms imply that the multiverse is a homogeneous class-forcing  extension of each of its worlds.
\end{proposition}

\begin{proof} Since the models $M^G$ give a complete semantics for $\mathsf{MV}_T$, we may assume that every model of $\mathsf{MV}_T$ is of this form. Now, working in a model $M^G$ as above, let $W$ be a world. So  $W$ is a ground of a model  of the form $M[G\restriction \alpha]$, for some $\alpha \in Ord^M$.

Let  $\Pe\in W$ be a poset such that for some $H_0$ $\Pe$-generic over $W$, $W[H_0]=M[G\restriction \alpha]$, and let $\kappa \geq \alpha$ be an uncountable $W[H_0]$-cardinal such that the cardinality of $\Pe$, as computed in $W$, is less than $\kappa$. Let $H_1$  be $Coll(\omega , <\kappa +1) /G\restriction \alpha$-generic over $M[G\restriction \alpha]$ so that $M[G\restriction \alpha][H_1]=M[G\restriction \kappa +1]$. Thus, $W[H_0][H_1]=M[G\restriction \kappa +1]$ is a generic extension of $W$ by a poset of cardinality $\kappa$ that collapses $\kappa$ to $\omega$, hence by Kripke's theorem (\cite{jech2003}, Lemma 26.7) equivalent to $Coll(\omega , \kappa)$.  Since $Coll(\omega , \kappa)$ is homogeneous, the proposition follows. 
\end{proof}

\noindent
The following straightforward consequence of the $\mathsf{MV}_T$ axioms asserts that every set-forcing generic extension of a world is also a world.

\begin{proposition}
\label{coro1}
The $\mathsf{MV}_T$ axioms imply that if $W$ is a world and $G$ is $\Pe$-generic over $W$ for some poset $\Pe \in W$, then $W[G]$ is also a world.
\end{proposition}

\begin{proof}
Suppose $W$ is a world and $G$ is  $\mathbb{P}$-generic over $W$, for some poset $\mathbb{P}\in W$. By Axiom 3, let $W'$ be a world such that $G\in W'$. By Amalgamation (Axiom 6), let $W''$ be a world such that both $W$ and $W'$ are grounds of $W''$. As $W\subseteq   W[G]\subseteq W''$, and $W$ is a ground of $W''$, the forcing extension $W[G]$ of $W$ is also a ground of $W''$. Hence, since every ground of a world is also a world (Axiom 5), $W[G]$ is a world, as wanted.
\end{proof}

\subsection{$M^{G}$ and The Translation Function}

As already anticipated in section 1.1, Steel's conception rests upon the crucial fact that the language of $\mathsf{MV}$, $\mathcal{L}_\mathsf{MV}$, may be seen as a sublanguage of $\mathcal{L}_\mathsf{\in}$. In order to show this, Steel defines a recursive translation function $t$ from $\mathcal{L}_{\mathsf{MV}}$ into $\mathcal{L}_{\in}$, such that:

\[\mathsf{MV}_T\vdash \varphi \quad \mbox{ if and only if } \quad T\vdash t(\varphi) \tag{Transl} \]

\noindent
where $T$, as said, is $\mathsf{ZFC}$+LCs. The translation function may  be rendered more transparently in terms of the semantics of $\mathsf{MV}$ as follows:

\begin{theorem}[Translation Function]\label{Transl}

For any sentence $\varphi$ of $\mathcal{L}_{\mathsf{MV}}$, every countable model $M$ of $\mathsf{ZFC}$ and every $G \ Coll(\omega, <Ord^M)$-generic over $M$ 

\[M^G\models \varphi \quad \mbox{ iff } \quad M\models t(\varphi) \tag{Transl}\footnote{\cite{steel2014}, p. 166. For further details on (Transl), see \cite{maddy-meadows2020}, section 5, pp. 137-43.}\]

\end{theorem}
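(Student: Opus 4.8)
The plan is to define the translation function $t$ by recursion on the syntactic complexity of $\varphi$ and then to verify the biconditional $M^G \models \varphi$ iff $M \models t(\varphi)$ by a parallel induction. The guiding idea is that the whole structure $M^G$ — its sets, its worlds, and the incidence relations between them — is definable from $M$ together with the $Coll(\omega, <Ord^M)$-generic $G$, and that, because this collapse is weakly homogeneous (the feature exploited in Lemma \ref{homo}), the truth value in $M^G$ of any sentence of $\mathcal{L}_{\mathsf{MV}}$ is already decided by the forcing relation over $M$. Concretely, $t(\varphi)$ will amount to the assertion, evaluated in $M$, that the empty condition $\mathbf{1}$ forces the canonical rendering $\hat\varphi$ of $\varphi$ as a statement about the collapse extension and its grounds.

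First I would fix the atomic dictionary. Every set of $M^G$ is an element of $M[G]$, hence is denoted by some $Coll(\omega, <Ord^M)$-name lying in $M$; every world of $M^G$ is a ground of some $M[G\restriction\alpha]$, and by the ground-model definability theorem such a ground has the uniform form $W^{\alpha}_{r} = \{x : \Phi(x, r, \alpha)\}$ for a fixed formula $\Phi$ and a parameter $r \in M$. I would accordingly translate set-variables into names and world-variables into pairs $(\alpha, r)$ coding grounds, and render the atomic predicates — equality and membership of sets, the sort predicates ``is a set'' and ``is a world'', and the cross-sort relation $x \in w$ — as the corresponding statements forced over $M$ about those names and ground-parameters.

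Next I would treat the connectives and quantifiers. The propositional connectives commute with $t$ up to homogeneity: since the relevant initial segments $Coll(\omega, <\kappa)$ are weakly homogeneous, a sentence with its names and parameters fixed is forced by $\mathbf{1}$ iff it holds in $M^G$, so negation and conjunction pass cleanly through the forcing relation. For $\exists x_{\mathrm{Set}}\,\psi$ I would quantify in $M$ over $Coll(\omega, <Ord^M)$-names, using that every set of $M[G]$ is named; for $\exists w_{\mathrm{World}}\,\psi$ I would quantify in $M$ over the ordinal $\alpha$ and over parameters $r$ such that $\Phi(\cdot, r, \alpha)$ defines a ground of $M[G\restriction\alpha]$.

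The hard part will be this last, world-quantifier clause, where two things must be secured. First, the collection of all grounds of all the $M[G\restriction\alpha]$ must be captured uniformly and internally in $M$: this is exactly where the uniform definability of grounds (and, where directedness is needed, Usuba's results) enters, converting ``range over all worlds of $M^G$'' into the genuinely $\mathcal{L}_{\in}$-expressible ``range over all $(\alpha, r)$ for which $\Phi(\cdot, r, \alpha)$ is a ground''. Second, the whole translation must be independent of the particular $G$: here the weak homogeneity furnished by Lemma \ref{homo} guarantees that whether a given parameter codes a genuine ground, and whether $t(\psi)$ then holds, is decided by $\mathbf{1}$, so that the quantifier may be evaluated in $M$ alone. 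Once both points are in place the induction closes, yielding the stated biconditional; combined with the completeness of the $M^G$-semantics recalled in the previous subsection, it also delivers the syntactic form $\mathsf{MV}_T \vdash \varphi$ iff $T \vdash t(\varphi)$.
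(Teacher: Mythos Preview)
The paper does not actually prove this theorem: it is stated with a footnote attributing it to \cite{steel2014}, p.~166, and referring the reader to \cite{maddy-meadows2020}, section~5, for details. So there is no in-paper proof to compare against.

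That said, your sketch is essentially the standard argument from those sources: define $t$ by recursion on formula complexity, translate set-variables to collapse names and world-variables to ground-parameters via the Laver--Woodin uniform definability of grounds, and use weak homogeneity of the L\'evy collapse to make the forcing relation over $M$ decide the truth value in $M^G$ independently of the particular generic $G$. One small point: you cite Lemma~\ref{homo} for the homogeneity step, but that lemma is about the multiverse as a whole being a homogeneous extension of each world; what you actually need for the translation is just the classical weak homogeneity of $Coll(\omega,<\kappa)$ itself, which is prior to (and an ingredient of) Lemma~\ref{homo} rather than a consequence of it. Otherwise the outline is sound and matches the approach the paper defers to.
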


\noindent
It should be noted that, in the presence of the specified semantics of $\mathsf{MV}$, the sentence $t(\varphi)$ asserts `$\varphi$ is true in all multiverses obtained from me'. Equivalently, `$\varphi$ is true in some multiverse obtained from me'. Now, as said (section 2.3), Amalgamation is needed to show that the models $M^G$ provide a complete semantics for $\mathsf{MV}_T$. 






\section{The Core of $\mathsf{MV}$}

In this section, we will, on the one hand, show how the `core hypothesis' arises in the context of the $\mathsf{MV}$ axioms, in particular, how it is mathematically justified; this will help us provide an answer to the following question:

\begin{question}

Under what circumstances is the \textit{core} definable, and how?

\end{question}

On the other hand, we will show why Usuba's result (Theorem \ref{usuba} below) implies that $\mathsf{MV}_T$, where $T=\mathsf{ZFC}$+`there exists a proper class of extendible cardinals', proves that there exists a \textit{core universe}. 

However, before proceeding to review all such results, we address the `core hypothesis' in the context of \textit{just} $\mathsf{ZFC}$, and recall relevant set-theoretic geological concepts and theorems which make sense of the hypothesis, and which will be instrumental to present our own results in section 4.

\subsection{Prelude: The (Outer) Core of $\mathsf{ZFC}$}

Initiated by Reitz and Hamkins a few years ago, set-theoretic geology has brought forward a very innovative approach to `universes of set theory' producible using forcing.\footnote{Fundamental references here are \cite{reitz2007}, and \cite{hamkins-reitz-woodin2008}. A comprehensive account of the programme's conception and results is in \cite{fuchs-hamkins-reitz2015}, but see also \cite{hamkins2012}, pp. 443-47, for further philosophical discussion.} Here follows a very brief review of its fundamental concepts.

Take $V$ to be a model of the axioms $\mathsf{ZFC}$. Hypothetically, it might be that $V$ is a forcing extension of a ground model $W$, that is, that there exists a $W$-generic filter $G \subseteq \mathbb{P} \in W$ such that $V=W[G]$. If this is the case, then it makes sense to explore the 
`geology' of $V$, that is, the collection of grounds, the generic extensions of grounds and their grounds, and so on, which $V$ might contain, where a ground of a model $M$ is a model $N$ of $\mathsf{ZFC}$ such that $M$ has been obtained through forcing over $N$, that is, as an extension $M=N[G]$, where $G$ is a generic filter of a partial order $\mathbb{P} \in N$. Further `geological' notions will then crop up, which can be spelt out as follows. 

The \textit{mantle} $\mathbb{M}$ is the intersection of all the \textit{grounds} of a model $M$ of $\mathsf{ZFC}$, and the \textit{bedrock} is a class $W$ which is a \textit{minimal} ground of $V$. By results of Usuba \cite{U:DDG}, the mantle is a model of $\mathsf{ZFC}$. Since \cite{reitz2007}, the following axiom has proved to be central to all geological investigations:

\begin{axiom}[Ground Axiom (GA)]

$V$ has no proper grounds. 

\end{axiom}

Now, if the mantle itself satisfies GA, then the mantle is a \textit{bedrock}, in particular, a \textit{minimum} bedrock contained in all other grounds, and the latter could legitimately be seen as the \textit{core universe} of the set-generic multiverse (generated over it) that we'll be describing in full detail in the next subsection.\footnote{\label{DDG}An essential property to be satisfied for this to take place is Downwards Directedness, that is, that any two grounds have a ground \textit{in common}.  The Downwards Directedness Hypothesis (DDG) is, precisely, the claim that all grounds are \textit{downwards directed}. \cite{U:DDG} proves that DDG already holds in $\mathsf{ZFC}$ (Proposition 5.1, p. 13).} A fuller mathematical characterisation of the core may, thus, be attempted. However, already at this point, it emerges that the features of the core may be (over $\mathsf{ZFC}$) indeterminate. This is shown by the following, fundamental theorem:

\begin{theorem}[Fuchs, Hamkins, Reitz]\label{mantle}

Every countable model of $\mathsf{ZFC}$ can be the mantle of another model of $\mathsf{ZFC}$.\footnote{\cite{fuchs-hamkins-reitz2015}, p. 464, Main Theorem 4.}

\end{theorem}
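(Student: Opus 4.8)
The plan is to realise the given $W \models \mathsf{ZFC}$ as the mantle of a proper-class-forcing extension $V = W[G]$, where the forcing is chosen to be as ``spread out'' and homogeneous as possible. Concretely, I would force over $W$ with the Easton-support product $\mathbb{P} = \prod_{\gamma \in \mathrm{Reg}} \mathrm{Add}(\gamma,1)$ adding a Cohen subset to every regular cardinal $\gamma$. This is a tame, progressively closed class forcing, so $V := W[G] \models \mathsf{ZFC}$ and all cardinals and cofinalities are preserved; its two salient features are that each initial-segment factor $\mathbb{P}_{\le\gamma} := \prod_{\gamma'\le\gamma}\mathrm{Add}(\gamma',1)$ is \emph{set} forcing, while the tail $\mathbb{P}_{>\gamma} := \prod_{\gamma'>\gamma}\mathrm{Add}(\gamma',1)$ is $\le\gamma$-closed. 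Writing $\mathbb{M}$ for the mantle of $V$, the theorem reduces to the two inclusions $\mathbb{M} \subseteq W$ and $\mathbb{M} \supseteq W$.

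The first inclusion is the routine half. For each regular $\gamma$ put $W_\gamma := W[G_{>\gamma}]$, the class-forcing extension of $W$ by the tail generic. Since $G$ factors as $G_{\le\gamma}\times G_{>\gamma}$, the product lemma gives $V = W_\gamma[G_{\le\gamma}]$ with $\mathbb{P}_{\le\gamma}$ a set, so each $W_\gamma$ is a set-forcing ground of $V$ and hence $\mathbb{M} \subseteq \bigcap_\gamma W_\gamma$. To evaluate this intersection I use that $\mathbb{P}_{>\gamma}$ is $\le\gamma$-closed and therefore adds no new subsets of any $\eta \le \gamma$, so $P(\eta)^{W_\gamma} = P(\eta)^W$. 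If a set of ordinals $x \subseteq \eta$ lies in every $W_\gamma$, then choosing $\gamma \ge \eta$ yields $x \in W$; as every set is coded by a set of ordinals, $\bigcap_\gamma W_\gamma = W$, whence $\mathbb{M} \subseteq W$.

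The reverse inclusion $W \subseteq \mathbb{M}$ — equivalently, that \emph{every} set-forcing ground $U$ of $V$ contains $W$ — is the crux and the step I expect to be the main obstacle, since an arbitrary ground bears no syntactic relation to $\mathbb{P}$. The strategy I would pursue is to show that each ground swallows a high tail: if $V = U[h]$ with $\mathbb{Q} \in U$ and $|\mathbb{Q}|^V \le \delta$, and $\kappa > \delta$ is regular, then $W_\kappa = W[G_{>\kappa}] \subseteq U$, which suffices because $W \subseteq W_\kappa$. The engine is the $\delta^+$-cover and $\delta^+$-approximation properties, which $U$ enjoys in $V$ by the Hamkins--Laver--Woodin ground-model definability theorem (the same machinery underlying Usuba's downward-directedness result, quoted above as a theorem of $\mathsf{ZFC}$). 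Since $\mathbb{P}_{>\kappa}$ is $\le\kappa$-closed, the powerset $P(\kappa)^V = P(\kappa)^{W[G_{\le\kappa}]}$ is ``frozen'' at level $\kappa$; comparing the two grounds $U$ and $W_\kappa$ through their $\kappa^+$-approximations should force $W_\kappa \subseteq U$. Concretely, I would verify that every set of ordinals in $W_\kappa$ is $\kappa^+$-approximated inside $U$ and then invoke the approximation property, or, as an alternative route, extract a common ground $Z \subseteq U \cap W_\kappa$ from Usuba's $\mathsf{DDG}$ and push the closure of the tail through $Z$.

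Finally, since $W$ is a class-ground but not a set-ground of $V$ (the product has proper-class length), $V$ has no bedrock and the Ground Axiom fails in $V$; together with downward directedness this confirms that the intersection of all grounds is exactly the class $W$, giving $\mathbb{M} = W$. The entire construction is uniform in $W$, so every model of $\mathsf{ZFC}$ occurs as such a mantle, as claimed.
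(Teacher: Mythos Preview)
The paper does not actually prove this theorem; it merely quotes it with a reference to \cite{fuchs-hamkins-reitz2015}, so there is no in-paper argument to compare against. That said, your proposed construction does not work, and the gap is precisely where you flagged uncertainty: the inclusion $W\subseteq \mathbb{M}$ fails in general for the plain Easton product of Cohen forcings. Suppose $W=W_0[c]$ with $c$ a Cohen real over some inner model $W_0$. The Easton product $\mathbb{P}$ is the same computed in $W_0$ or in $W$, so $G$ is also $\mathbb{P}$-generic over $W_0$, and $V=W_0[c][G_{\le\omega}][G_{>\omega}]$. Then $U:=W_0[G_{>\omega}]$ is a \emph{set}-forcing ground of $V$ (the quotient is just $\mathrm{Add}(\omega,1)\times\mathrm{Add}(\omega,1)$), but $c\notin U$ because $G_{>\omega}$ is $\le\omega$-closed over $W_0$ and adds no reals. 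Hence $W\not\subseteq U$, so $W\not\subseteq\mathbb{M}$. Your approximation-property argument cannot rescue this: for $\kappa=\omega_1$ one has $W_\kappa=W_0[c][G_{>\omega_1}]$, and $c\in W_\kappa\setminus U$, so $W_\kappa\not\subseteq U$ outright.

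The missing idea is \emph{coding}: the Fuchs--Hamkins--Reitz construction does not merely add generic sets, it performs a class iteration that encodes every set of $W$ (indeed, unboundedly often) into a robust feature of the extension --- typically the GCH pattern --- in the style of Reitz's Continuous Coding Axiom (cf.\ the use of CCA in the proof of Theorem~\ref{threshold} in this paper). Any ground $U$ of $V$ agrees with $V$ on the power-set function above the size of the quotient forcing, so $U$ sees a tail of the code and can reconstruct all of $W$ internally; this is what forces $W\subseteq U$ regardless of whether $W$ had grounds of its own. Your product of Cohen forcings carries no such self-encoding information, which is why the argument breaks.
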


In an attempt to both attain a more determinate core, and further investigate its nature, set-theoretic geologists have taken into account a different hypothesis, which can be expounded as follows. If the mantle does not satisfy GA, then it makes sense to take into account the `mantle of the mantle',
$\mathbb{M}^1=\mathbb{M}^{\mathbb{M}}$, and then, always under the assumption that GA is not satisfied, the `mantle of the mantle of the mantle' and so on. In other terms, it makes sense to take into account the iteration of the `mantle operation'. Now, iterating the mantle is not a trivial task, as there are technical aspects involved which may prevent one from even \textit{defining} the \textit{n}th iterates of the mantle.\footnote{See \cite{fuchs-hamkins-reitz2015}, p. 496ff.} Assuming these difficulties may be overcome, one might ask whether, for an $\alpha \in Ord$ iterate of the mantle, one has that $\mathbb{M}^{\alpha}=\mathbb{M}^{\beta}$, for all $\beta>\alpha$, that is, whether there is a \textit{minimal} $\alpha$ such that $\mathbb{M}^{\alpha} \models GA$. If there is one, then $\mathbb{M}^{\alpha}$ is said to be the \textit{outer core} of the initial model (of $V$, if $V$ was such model). However, in this case, as in the previous one where $\mathbb{M} \models GA$, Theorem \ref{mantle} will imply that also the outer core does not have \textit{determinate} features, namely, that it might satisfy a wide range of mutually incompatible properties. 

Moreover, it has been shown that the outer core cannot be uniquely pinned down in the iteration process, as a recent result by Reitz and Williams has confirmed \cite{fuchs-hamkins-reitz2015}'s Conjecture 74,\footnote{Cf. \cite{reitz-williams2019}, p. 6, Theorem 3.1 and ff. Conjecture 74 is in \cite{fuchs-hamkins-reitz2015}, p. 497.} and shown that, for any $\alpha \in Ord$, the outer core of a model $M$ could be any $\mathbb{M}^{\alpha}$, that is, any $\alpha$-th iterate of the mantle of $M$ (possibly also including including $\mathbb{M}^{Ord}$, although no proof is currently available for this specific case).\footnote{\cite{fuchs-hamkins-reitz2015} even envisages a possible extension, over the theory $\mathsf{GBC}$, of the iteration of the mantle operation beyond $\mathbb{M}^{Ord}$ for some model $M$, something which would imply that $M$ has \textit{no} outer core at all. See \cite{fuchs-hamkins-reitz2015}, pp. 497-498.}

To sum up, $\mathsf{ZFC}$ alone does not, in itself, guarantee the existence of a `core universe' and, moreover, over $\mathsf{ZFC}$, no full-fledged, definite mathematical characterisation of the core may arise. As we shall see, a slightly different scenario comes to light, if one takes onboard LCs. 

\subsection{The Proof of the Existence}

\cite{steel2014} credits Woodin for observing that `if the multiverse has a definable world, then it has a unique definable world, and this world is included in all the others'.\footnote{\cite{steel2014}, p. 168.} We give next the first complete proof of this fact.\footnote{We thank John Steel for providing us, in private correspondence, with some details for the proof of Theorem \ref{defworld}.}

\begin{theorem}
\label{defworld}
If the multiverse has a definable world, then it has a unique definable world. More precisely, suppose $\varphi$ and $\psi$ are formulas in the language of the multiverse with only one free variable for sets. Then, 
$$\mathsf{MV}_T\vdash '\forall U,W (\forall x ((x\in U \leftrightarrow \varphi(x)) \wedge (x\in W \leftrightarrow \psi(x))\to U=W).'$$ 
\end{theorem}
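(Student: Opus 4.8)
The plan is to reduce, exactly as in the proof of Lemma \ref{homo}, to reasoning inside an arbitrary natural model $M^G$ — this is legitimate because the models $M^G$ give a complete semantics for $\mathsf{MV}_T$ — and then to prove the sharper statement that \emph{any} parameter-free definable world is contained in \emph{every} world. Granting this, one applies it twice: once with the world $U=\{x:\varphi(x)\}$ against the world $W=\{x:\psi(x)\}$, and once the other way around, obtaining $U\subseteq W$ and $W\subseteq U$, hence $U=W$. Since $M^G$ was arbitrary and these models are complete for $\mathsf{MV}_T$, the displayed sentence is a theorem of $\mathsf{MV}_T$.

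The heart of the argument is the containment lemma, which I would prove via homogeneity. Fix a world $V_0$ and a world $U=\{x:\varphi(x)\}$ with $\varphi$ parameter-free. By Lemma \ref{homo}, the universe of sets of the multiverse is a homogeneous class-forcing extension of $V_0$; write it as $V_0[\Gamma]$. The principle I would invoke is the standard one for (weakly) homogeneous forcing: every set that is ordinal-definable in the extension from parameters lying in the ground model already belongs to the ground model. Now fix $a\in U$ and set $\gamma=\mathrm{rank}(a)+1$. By Axiom 3 all worlds have the same ordinals, so $\gamma$ is an ordinal of $V_0$. Because $U$ is a world, hence a transitive proper-class model of $\mathsf{ZFC}$, the collection $V^U_\gamma:=\{x\in U:\mathrm{rank}(x)<\gamma\}$ is a genuine set, and it is definable in the multiverse from the single ordinal parameter $\gamma$ together with the parameter-free formula $\varphi$. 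Thus $V^U_\gamma$ is ordinal-definable from a $V_0$-ordinal in the homogeneous extension $V_0[\Gamma]$, so by the homogeneity principle $V^U_\gamma\in V_0$; by transitivity of $V_0$ this gives $a\in V^U_\gamma\subseteq V_0$. As $a\in U$ was arbitrary, $U\subseteq V_0$, which is the lemma.

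The main obstacle I anticipate is making the ordinal-definability of $V^U_\gamma$ over $V_0[\Gamma]$ precise and genuinely compatible with the homogeneity principle. The formula $\varphi$ lives in the two-sorted language $\mathcal{L}_{\mathsf{MV}}$, so I must verify that the entire multiverse structure $M^G$ — its worlds and the membership relation of sets in them — is interpretable over the single $\in$-structure $V_0[\Gamma]$ in an \emph{automorphism-invariant}, and hence ordinal-definable, way, so that ``$\varphi(x)$'' reduces to an $\in$-formula about $V_0[\Gamma]$ using only ordinal and $V_0$-parameters. Concretely, the worlds of $M^G$ are the grounds of the collapse iterates $M[G\restriction\alpha]$, and I would argue that this family, and thereby satisfaction of $\varphi$, is fixed by the automorphisms of the homogeneous collapse that witness Lemma \ref{homo}: these fix $V_0$ pointwise and fix every ordinal, so $V^U_\gamma$ is invariant under them. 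It is precisely this invariance that upgrades ``definable in $M^G$'' to ``ordinal-definable over $V_0$'', at which point homogeneity yields $V^U_\gamma\in V_0$. Verifying this invariance is the technical core; the remaining steps are routine.
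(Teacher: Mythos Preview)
Your approach and the paper's share the same core idea---the collapse is homogeneous, so a parameter-free definable world must sit inside every world---and you even isolate the same sharper containment lemma (a definable world is contained in \emph{every} world) that the paper's argument implicitly proves. The difference is in how homogeneity is deployed. You argue globally over the full class-forcing extension $V_0[\Gamma]$, invoking the principle ``ordinal-definable sets in a homogeneous extension lie in the ground''. The paper instead runs an induction on $\alpha$ showing $V_\alpha^U\subseteq W$, and at the successor step reduces to a \emph{set-sized} collapse $Coll(\omega,\gamma)$ over $W$: first the translation function $t$ shows that $V_\alpha^U$ is definable in $W$ from the parameter $\alpha$ (hence $V_\alpha^U\in W$ once $V_\alpha^U\subseteq W$); then for $Y\in V_{\alpha+1}^U$ one chooses $\gamma$ so large that $W^{Coll(\omega,\gamma)}$ forces ``I am an extension of the $\varphi$-definable world of the multiverse I generate'', which gives $U\subseteq W[H]$ and hence $Y\in W[H]$ for \emph{all} generic $H$; finally, since $Y\subseteq V_\alpha^U\subseteq W$ by the inductive hypothesis, ordinary set-forcing homogeneity yields $Y\in W$.

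The paper's inductive route buys two concrete simplifications over yours. First, it never has to make sense of ``ordinal-definable in $V_0[\Gamma]$'': that structure is not a model of $\mathsf{ZFC}$ (every set is countable there), so the standard $\mathrm{HOD}^{V[G]}\subseteq V$ fact does not literally apply, and you would need to re-derive it via the automorphism-invariance of the world predicate---precisely the obstacle you flag as the technical core. Second, the inductive hypothesis $Y\subseteq W$ reduces the homogeneity step to its most elementary form: one only needs that for each $a\in W$ the statement ``$\check a\in\dot Y$'' is decided by the empty condition. Your direct approach is not wrong, and the difficulty you isolate is exactly the right one; but the paper sidesteps it entirely by localizing to set forcing and carrying the inclusion $V_\alpha^U\subseteq W$ along the induction.
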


\begin{proof}
Working in a model $M^G$ of $\mathsf{MV}_T$ (see section \ref{MV}), let $U$ and $W$ be worlds defined by $\varphi$ and $\psi$, respectively. Since $U$ and $W$ are transitive and contain the same ordinals, it will be sufficient to  show that $U\subseteq W$ by showing $V_\alpha^U\subseteq W$ by induction on $\alpha$. This is clear for $\alpha =0$, and also clear for $\alpha$ a limit ordinal provided it holds for all ordinals less than $\alpha$.
So suppose $V_\alpha^U \subseteq W$ and let us show $V_{\alpha+1}^U\subseteq W$. Note that $V_\alpha^U\subseteq W$ implies $V_\alpha^U \in W$. This is because
$$x \in V_\alpha^U \,\,\mbox{  iff  } \,\,W \models t('rank(v)\!<\!\alpha \mbox{  in   the unique world defined by } \varphi')[x]$$
Hence, $V_\alpha^U$ is definable in $W$ with parameter $\alpha$.

Now let $Y\subseteq V_{\alpha}^U$, $Y\in U$.  We must check that $Y\in W$. As in Proposition \ref{homo}, we can find
$\gamma$ large enough so that
$$W^{Coll(\omega,\gamma)}\models \mbox{'I am a forcing extension of $X$, for some world $X$ which }$$ $$\mbox{\hspace{1.8cm}is definable by }  \varphi  \mbox{  in the multiverse generated by me}.'$$
It follows that $U\subset W[H]$ for all $H$  $Coll(\omega,\gamma)$-generic over $W$. So
$Y\in W[H]$ for all such $H$.  Letting $\dot{Y}\in W$ be a $Coll(\omega,\gamma)$-name for $Y$, we have that $Y$ is definable in $W$ as  the set of all $x\in V_\alpha^U$ which are forced by $Coll(\omega, \gamma)$ to belong to $\dot{Y}$. Hence, $Y\in W$.
\end{proof}

It follows that if there is a definable world $U$ in the multiverse, then it is unique and, by Proposition \ref{homo},\ is contained in all of $\mathsf{MV}$'s worlds.\footnote{\cite{steel2014}, p. 168ff.} Clearly, since the core, if it exists, is definable, the core exists if and only if there is a definable world.

\subsection{The Core is the Mantle of $V$}

We proceed to introduce the fundamental result, due to Toshimichi Usuba, which shows that, under the assumption of sufficiently strong LCs, $V$ has a \textit{smallest} ground. 

\begin{theorem}[\cite{U:ECM}, p. 72]\label{usuba}

Suppose there exists an extendible cardinal. Then the mantle is a ground of $V$. In fact if $\kappa$ is extendible, the $\kappa$-mantle of $V$ is its smallest ground.\footnote{For a cardinal $\kappa$, the $\kappa$-mantle is the intersection of all $\kappa$-grounds, that is of all grounds $W$ of $V$, such that there exists a $\mathbb{P} \in W$ of size less than $\kappa$, and generic $G \subseteq \mathbb{P}$, such that $V=W[G]$.}

\end{theorem}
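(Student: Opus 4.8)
The plan is to prove the second, stronger assertion — that for a fixed extendible $\kappa$ the $\kappa$-mantle $\mathbb{M}_\kappa$ (the intersection of all $\kappa$-grounds, i.e. grounds obtained by forcing of size $<\kappa$) is the smallest ground — and then derive the first assertion as a corollary. The reduction is immediate in principle: the full mantle $\mathbb{M}$ is by definition contained in \emph{every} ground, so if I can show that $\mathbb{M}$ coincides with some actual ground, that ground is automatically the least one; concretely I will show $\mathbb{M}=\mathbb{M}_\kappa$ and that $\mathbb{M}_\kappa$ is a ground.

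Three tools I would take off the shelf are: (i) the uniform definability of grounds (Laver, Woodin, Fuchs--Hamkins--Reitz), giving a single formula and a class $\{W_r\}$ enumerating all grounds, with each $W_r$ definable from the parameter $r$ and recoverable via the $\delta$-cover and $\delta$-approximation properties; (ii) the observation that a ground obtained by a forcing of size $<\kappa$ is coded by a parameter lying in $H_{\kappa^+}$, so that there are only \emph{set}-many $\kappa$-grounds; and (iii) Usuba's Strong Downward Directed Grounds theorem (a $\mathsf{ZFC}$ theorem, cf. the DDG footnote), that any set-indexed family of grounds has a common ground below all of them.

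Granting these, the argument runs in three steps. First, by (ii) the $\kappa$-grounds form a set $\{W_r:r\in R\}$, and by (iii) Strong DDG produces a ground $N\subseteq\bigcap_{r\in R}W_r=\mathbb{M}_\kappa$. Second — the decisive step — I would use extendibility to show $N$ may be taken to be itself a $\kappa$-ground: fixing an extendibility embedding $j:V_\lambda\to V_{j(\lambda)}$ with $\crit(j)=\kappa$, $j(\kappa)>\lambda$, and $\lambda$ large enough that $V_\lambda$ computes $N$, the set $R$, and the whole family of $\kappa$-grounds, elementarity carries the family of $\kappa$-grounds to the family of $j(\kappa)$-grounds while fixing every forcing of size $<\kappa$ and its coding parameter pointwise (these sit below $\crit(j)$). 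This rigidity forces the common lower bound to be witnessed by a forcing of size $<\kappa$, i.e. $N$ is a $\kappa$-ground; being a $\kappa$-ground contained in all $\kappa$-grounds, $N$ is the smallest one, so $N=\mathbb{M}_\kappa$ and in particular $\mathbb{M}_\kappa$ is a ground. Third, I would reflect through $j$ once more to show that every ground of $V$ contains a $\kappa$-ground, whence $\mathbb{M}_\kappa\subseteq W$ for each ground $W$, so $\mathbb{M}_\kappa\subseteq\mathbb{M}$; since trivially $\mathbb{M}\subseteq\mathbb{M}_\kappa$ we conclude $\mathbb{M}=\mathbb{M}_\kappa$ is the smallest ground.

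The main obstacle is precisely the second step. Strong DDG by itself only delivers \emph{some} common ground below the set of $\kappa$-grounds, with no control over the size of the witnessing forcing, and a priori there could be a proper-class-length strictly descending chain of grounds with no $<\kappa$ floor. Extendibility is exactly what excludes this, by reflecting the potentially class-sized directed system of grounds into a set-sized, $j$-fixed configuration. Making the interplay between $j$, the ground parametrization of (i), and the $\delta$-cover/$\delta$-approximation characterisation of grounds fully rigorous — in particular verifying that these properties transfer correctly under the embedding — is the technical heart of the proof and the place where the large-cardinal strength is genuinely consumed.
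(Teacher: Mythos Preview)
The paper does not supply a proof of this theorem at all: it is quoted as Usuba's result (with a citation to \cite{U:ECM}) and is then used as a black box in the proof of Proposition~\ref{coreexists}. There is therefore no ``paper's own proof'' to compare your proposal against.

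That said, your sketch is broadly aligned with Usuba's actual argument in \cite{U:ECM}, with one organizational difference worth noting. Usuba's key lemma is the clean statement that \emph{every} ground of $V$ is already a $\kappa$-ground when $\kappa$ is extendible; once that is in hand, Strong DDG applied to the set of $\kappa$-grounds yields a ground $N$ below all of them, and since $N$ is itself a ground it is a $\kappa$-ground, whence $N=\mathbb{M}_\kappa=\mathbb{M}$. Your Step~3 gestures at this lemma (in the weaker form ``every ground contains a $\kappa$-ground'', which suffices), but your Step~2 attempts to argue directly that the DDG lower bound $N$ is a $\kappa$-ground via an embedding argument. As written, Step~2 is the shakiest part: the coding parameters for $\kappa$-grounds live in $H_{\kappa^+}$, not below $\crit(j)=\kappa$, so they are not literally fixed pointwise by $j$, and the sentence ``this rigidity forces the common lower bound to be witnessed by a forcing of size $<\kappa$'' hides exactly the work that the key lemma isolates. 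If you prove the stronger form of Step~3 first (every ground \emph{is} a $\kappa$-ground), Step~2 becomes redundant and the whole argument streamlines to Usuba's.
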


\noindent
Now, since \textit{all} grounds are \textit{downwards-directed},\footnote{See footnote \ref{DDG}.} by using Theorem \ref{usuba}, we can now prove the following:

\begin{prop}[Existence of the Core of the Multiverse of $\mathsf{MV}_T$]\label{coreexists}
Let $T$ be $\mathsf{ZFC}$+`there exists a proper class of extendible cardinals'. Then the $\mathsf{MV}_T$ axioms imply that the multiverse has a core, which is the mantle (and a ground) of every world in the multiverse.\footnote{Note the assumption, in Theorem \ref{coreexists}, that there exists a \textit{proper class}, not just \textit{one} extendible in all worlds of $\mathsf{MV}$. The reason for that is that a proper class of extendibles is needed, if one wants to preserve \textit{extendibles} in the set-forcing extensions of $V$ which count as worlds.}
\end{prop}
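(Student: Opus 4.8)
The plan is to work, as justified by the completeness of the $M^G$ semantics, inside an arbitrary model $M^G\models\mathsf{MV}_T$ and to identify the core explicitly as the (common) mantle of the worlds. First I would fix an arbitrary world $W$. By Axiom 2 we have $W\models T$, so $W$ contains a proper class of extendible cardinals; picking one such $\kappa$ and applying Theorem \ref{usuba} inside $W$ shows that the $\kappa$-mantle of $W$ is the \emph{smallest} ground of $W$ and coincides with the full mantle $\mathbb{M}^W$. In particular $\mathbb{M}^W$ is a genuine set-forcing ground of $W$, i.e. $W=\mathbb{M}^W[G']$ for some set-generic $G'$. Since $W$ is a world, Axiom 5 then gives that $\mathbb{M}^W$ is itself a world. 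Thus every world has a smallest ground, and that ground is again a world.

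The key remaining step is to show that this mantle does not depend on the chosen world, i.e. that $\mathbb{M}^U=\mathbb{M}^W$ for any two worlds $U,W$. Here I would invoke Amalgamation (Axiom 6) to obtain set-generic $G,H$ with $U[G]=W[H]=:N$, and then use the forcing-invariance of the mantle across the generic multiverse. That invariance is exactly where the Downwards Directedness of grounds --- which Usuba proved to hold already in $\mathsf{ZFC}$ --- does the work: one inclusion $\mathbb{M}^{U[G]}\subseteq\mathbb{M}^U$ is immediate because any ground of $U$ is a ground of $U[G]$ (compose the forcings), and the reverse inclusion follows since, by downward directedness in $U[G]$, every ground of $U[G]$ has a common refinement with $U$ that is a ground of $U$. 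Hence $\mathbb{M}^U=\mathbb{M}^{U[G]}=\mathbb{M}^N=\mathbb{M}^{W[H]}=\mathbb{M}^W$, and the class $C:=\mathbb{M}^W$ is well-defined independently of $W$.

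To conclude, I would observe that $C$ is a world (by the first paragraph), that it is a ground --- hence a subclass --- of every world $W$, since $C=\mathbb{M}^W$, and that it is definable (the mantle is a definable class). Being a definable world contained in all worlds, $C$ is the core; uniqueness is guaranteed by Theorem \ref{defworld}. This delivers all three assertions: the multiverse has a core, and that core is simultaneously the mantle and a ground of every world.

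The main obstacle I anticipate is the world-independence of the mantle in the second paragraph: one must be careful that the mantle computed internally to each world is the object manipulated by Amalgamation, and that forcing-invariance is applied with the correct direction of the inclusions. Everything else --- applying Usuba inside a world, and recognising grounds of worlds as worlds via Axiom 5 --- is essentially bookkeeping against the axioms; the substance is that extendibility (via Axiom 2) localises Usuba's theorem to each world, while downward directedness globalises the resulting mantle to a single class across the whole multiverse.
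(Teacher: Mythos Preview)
Your proposal is correct and follows essentially the same route as the paper: apply Usuba's theorem inside each world (via Axiom~2) to get the mantle as a ground, promote it to a world via Axiom~5, use Amalgamation together with downward directedness to show all worlds share the same mantle, and then invoke Theorem~\ref{defworld} to identify this common mantle as the core. The only difference is cosmetic: the paper compresses your two-inclusion argument for $\mathbb{M}^U=\mathbb{M}^{U[G]}$ into a single clause (``since the grounds are downwards-directed, $\mathcal{M}_{U_0}=\mathcal{M}_{U_1}$''), whereas you spell out both directions explicitly.
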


\begin{proof}
By Theorem \ref{usuba}, the mantle $\mathbb{M}_W$ of every world $W$ of the multiverse is a ground, hence by Axiom 5 of $\mathsf{MV}$, it is also a world. Now suppose $U_0$ and $U_1$ are worlds. By Amalgamation, they are also grounds of some world $W$ containing them. Hence, since the grounds are downwards-directed, $\mathbb{M}_{U_0}=\mathbb{M}_{U_1}$. It follows that all the worlds of the multiverse have the same mantle $\mathbb{M}$, which is the intersection of all the worlds. Thus, $\mathbb{M}$ is definable by the formula
$$\forall U (U\mbox{ is a world}\to x\in U).$$
Hence by Theorem \ref{defworld} the mantle $\mathbb{M}$ is the core of the multiverse.
\end{proof}

\subsection{The Strength of the Large-Cardinal Assumptions}

By Proposition \ref{coreexists}, if $M$ satisfies that there exists an extendible cardinal, then $M^G$ has a \emph{core}. It is legitimate to ask whether the large-cardinal assumption in Theorem \ref{usuba} might be weakened to the level of a LC lower in the consistency strength hierarchy; more generally:

\begin{question}\label{supercompact}

For what choices of $T$ does $\mathsf{MV}_T$ prove the existence of a core of its multiverse?

\end{question}

The following theorem shows that the existence of a proper class of supercompact cardinals is not sufficient to prove the existence of the core, thus suggesting a potential threshold for the choice of $T$ mentioned by Question \ref{supercompact}. 

\begin{theorem}\label{threshold}
Assume there is a proper class of supercompact cardinals. Then there is a class forcing notion which forces that there exists a proper class of supercompact cardinals and  there is no core.
\end{theorem}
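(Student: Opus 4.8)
The plan is to show that, unlike the extendible case (Theorem~\ref{usuba}, Proposition~\ref{coreexists}), a proper class of supercompact cardinals is consistent with the non-existence of a \emph{smallest ground}, so that no world can sit inside all the others. First I would record the reduction. By the proof of Proposition~\ref{coreexists}, if a core exists it must coincide with the common mantle of the worlds and be itself a ground of each world; conversely a smallest ground (a \emph{bedrock}) always yields a definable world, hence a core by Theorem~\ref{defworld}. Thus a core exists in the multiverse generated from a model $N$ precisely when $N$ has a smallest ground. It therefore suffices to produce a class-forcing notion $\mathbb{P}$ over $V$ that (i) preserves the proper class of supercompacts and (ii) forces that the mantle is not a ground (equivalently, that there is no smallest ground); for then, passing to a countable elementary substructure and forming the associated $M^{G}$ (section~\ref{MV}), the worlds have no $\subseteq$-least element, so $M^{G}\models{}$``there is no core''.

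For (ii) I would take a reverse Easton product $\mathbb{P}=\prod_{\gamma}\mathbb{Q}_{\gamma}$, with Easton support, adding a generic Cohen-type subset at each coordinate $\gamma$ of a suitably chosen proper class $C$ of regular cardinals, with each $\mathbb{Q}_{\gamma}$ taken to be $<\gamma$-closed. Writing $G$ for the generic and, for $\alpha\in\mathrm{Ord}$, setting $W_{\alpha}=V[G\restriction(C\setminus\alpha)]$ (keeping only the coordinates $\geq\alpha$), each $W_{\alpha}$ is a genuine \emph{set}-forcing ground of $V[G]$, since the omitted part $\prod_{\gamma\in C\cap\alpha}\mathbb{Q}_{\gamma}$ is set-sized, and the $W_{\alpha}$ form a strictly $\subseteq$-decreasing $\mathrm{Ord}$-indexed chain. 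Because the tail $\prod_{\gamma\geq\alpha}\mathbb{Q}_{\gamma}$ is $<\alpha$-closed it adds no new bounded sets, whence $\bigcap_{\alpha}W_{\alpha}=V$, and $V$ is \emph{not} a set-ground of $V[G]$ as $G$ adds a fresh subset to unboundedly many cardinals while no set forcing can do so; here I would invoke the uniform ground-model definability theorem together with the standard ``bottomless model'' analysis of set-theoretic geology (Reitz), which shows directly that such reverse-Easton Cohen products have no bedrock (we may, if convenient, arrange the ground model to satisfy the Ground Axiom). Since every ground lies above some $W_{\alpha}$ while the chain strictly descends, no ground is $\subseteq$-minimal, giving (ii).

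The hard part is (i): the very forcings that manufacture the descending chain of grounds are the ones that tend to destroy large cardinals, so preserving a \emph{proper class} of supercompacts demands care. My plan is to precede or interleave $\mathbb{P}$ with a global Laver preparation (in the style of Apter--Hamkins) rendering every supercompact $\kappa$ indestructible under $<\kappa$-directed-closed forcing, and to arrange the coordinates of $\mathbb{P}$ so that, relative to each supercompact $\kappa$, the forcing factors as a part below $\kappa$ absorbed by a Levy--Solovay/chain-condition argument and a $<\kappa$-closed (hence $<\kappa$-directed-closed) tail. One then lifts a $\lambda$-supercompactness embedding $j\colon V\to M$ through the tail by constructing a master condition, using that $\mathbb{P}$ is sufficiently closed above $\kappa$ and that $j(\mathbb{P})$ agrees with $\mathbb{P}$ below $\kappa$. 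The delicate point, and the main obstacle I anticipate, is reconciling the two demands \emph{simultaneously for all supercompacts}: bottomlessness requires forcing nontrivially cofinally in the ordinals, yet each nontrivial stage near a supercompact must be shown harmless to its supercompactness, and the indestructibility preparation must itself remain compatible with the $W_{\alpha}$-analysis underlying (ii). Verifying this uniform preservation, rather than the geology, is where the real work lies.
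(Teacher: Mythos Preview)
Your overall strategy --- global Laver indestructibility followed by a class product of Cohen-type forcings to manufacture a strictly descending $\mathrm{Ord}$-chain of grounds --- is indeed the paper's approach, and your preservation sketch (factor below/above each supercompact, absorb the small part, use indestructibility for the closed tail) is essentially correct. The paper makes this precise by forcing only at the class $S$ of supercompacts that are \emph{not} limits of supercompacts, so that $\sup(S\cap\kappa)<\kappa$ for each $\kappa\in S$ and the factoring goes through cleanly.

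There is, however, a genuine gap in your argument for (ii). You assert that ``every ground lies above some $W_{\alpha}$'' and justify this by invoking Reitz's bottomless-model analysis together with, if convenient, GA in the ground model. Neither suffices. GA for $V$ says only that $V$ has no proper grounds; it does \emph{not} say that $V$ sits inside every ground of a class-forcing extension of $V$. And Reitz's construction does not show that a bare Easton product of Cohen forcings is bottomless; it shows this for the two-step forcing \emph{CCA followed by the Cohen product}. The CCA stage codes every set of ordinals into the continuum function proper-class-many times, and it is precisely this coding that forces any ground $W$ of the final model to contain the post-coding intermediate model, since $W$ and the final model must agree on the continuum function on a tail. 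Without that coding you have exhibited a descending chain $(W_\alpha)$ of grounds, but you have no control over other grounds of $V[G]$, and the conclusion that there is no bedrock does not follow.

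The paper inserts exactly this missing step: after an indestructibility-plus-GCH preparation it forces a version of CCA coding into the power-set function on $S$, and only then adds the Cohen subsets at members of $S$. The argument that every ground $W$ contains the post-CCA model (via agreement on the power-set function on a tail of $S$), and hence contains some tail model $V[G][H_0][H_1^{>\kappa}]$, is the heart of the bottomlessness proof. You should add this coding stage and check that it, too, preserves the supercompacts in $S$; the paper handles this via the prior indestructibility preparation.
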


\begin{proof}
First, force with a class-forcing iteration with Easton support in order to make every supercompact $\kappa$  indestructible by $<\!\!\kappa$-directed-closed forcing (see \cite{U:AA}), followed by  the Jensen's class-forcing iteration that forces the GCH. Standard arguments show that all the supercompact cardinals are preserved and no new supercompact cardinals are created. Over the generic extension, make again all supercompact cardinals $\kappa$ indestructible by $<\kappa$-directed-closed forcing (which preserves the GCH and does not create new supercompact cardinals) and call the resulting model $V[G]$. Now force with a class-forcing iteration $\mathbb{P}$ with Easton support that forces a version of the  Continuous Coding Axiom (CCA), similarly as in \cite{reitz2007}, which codes every set of ordinals  proper-class-many times into the power-set function on the class $S$ of supercompact cardinals that are not limit of supercompact cardinals. Again, standard arguments show that all supercompact cardinals in $S$ are preserved and no new supercompact cardinals are created. Let $V[G][H_0]$ be this model. Finally, force over $V[G][H_0]$ with the class-forcing product $$\mathbb{Q}=\prod_{\kappa \in ORD}\mathbb{Q}(\kappa)$$
where $\mathbb{Q}(\kappa)$ is the forcing for adding a Cohen subset of $\kappa$, if $\kappa$ is in $S$, and is the trivial forcing otherwise. Let the resulting  model be $V[G][H_0][H_1]$. We claim  that supercompact cardinals in $S$ are preserved. So  let $\kappa\in S$. 
Working in $V[G][H_0]$, note first that $\mathbb{Q}$ factors as $\mathbb{Q}_{\kappa} \times \mathbb{Q}_{[\kappa +1 , Ord)}$. 
Also note that, since $\kappa$ is not a limit of supercompact cardinals,  ${\rm{sup}}(S\cap \kappa)<\kappa$. So, $\mathbb{Q}_{\kappa}$ factors as $\mathbb{Q}_{{\rm{sup}}(S\cap \kappa)}\times \mathbb{Q}(\kappa)$, where $\mathbb{Q}(\kappa)$ is the forcing that adds a Cohen subset of $\kappa$. The product  $\mathbb{Q}_{{\rm{sup}}(S\cap \kappa)}\times \mathbb{Q}(\kappa)$ is equivalent, as a forcing notion, to $\mathbb{Q}(\kappa)\times \mathbb{Q}_{{\rm{sup}}(S\cap \kappa)}$. Also, since $\mathbb{Q}_{{\rm{sup}}(S\cap \kappa)}$ has cardinality less than $\kappa$ and $\mathbb{Q}(\kappa)$ does not add new bounded subsets of $\kappa$, $\mathbb{Q}(\kappa)\times \mathbb{Q}_{{\rm{sup}}(S\cap \kappa)}$ is equivalent to $\mathbb{Q}(\kappa)\ast \mathbb{Q}_{{\rm{sup}}(S\cap \kappa)}$. Now, since $\mathbb{Q}(\kappa)$ is $<\kappa$-directed closed it preserves the supercompactness of $\kappa$, and then so does $\mathbb{Q}_{{\rm{sup}}(S\cap \kappa)}$ subsequently, since it has cardinality less than $\kappa$.   Moreover, $\mathbb{Q}_\kappa$ forces that $\mathbb{Q}_{[\kappa +1 , Ord)}$ is $<\kappa$-directed-closed, and therefore it also preserves the supercompactness of $\kappa$. Since $\kappa^{<\kappa}=\kappa$ for every $\kappa \in S$, the forcing $\mathbb{Q}$ preserves cardinals and the power-set function. Now we may argue similarly as in \cite{reitz2007} to show that every ground of $V[G][H_0][H_1]$ has a proper subground, hence there is no core.\footnote{\cite{reitz2007}, Theorem 3.9, p. 1309.} For suppose $W$ is a ground of $V[G][H_0][H_1]$. Let $\mathbb{P}$  be a poset in $W$ and $g$ a $\mathbb{P}$-generic filter over $W$ such that $V[G][H_0][H_1]=W[g]$. Since the two models $V[G][H_0][H_1]$ and $W$ agree on the values of the power-set function for a tail of elements of $S$, as well as on statements of the form 
``$\kappa$ is the $\alpha$-th element of $S$" for a tail of cardinals $\kappa$ in $S$, every element of $V[G][H_0]$ is coded into the power-set function of $W$ on $S$, hence $V[G][H_0]\subseteq W$. Now let $\kappa$ in $S$ be greater than $|\mathbb{P}|$, and let $h_\kappa$ be the Cohen generic subset of $\kappa$ added by $H$. Since $V[G][H_0]\subseteq W$, every condition of the forcing $\mathbb{Q}(\kappa)$, therefore every bounded subset of $h_\kappa$, belongs to $W$. Hence, since the pair $\langle W, W[h]\rangle$ satisfies the $\kappa$-approximation property, $h_\kappa \in W$. Thus, writing $H_1$ as the product $H_1^{\leq \kappa}\times H_1^{>\kappa}$, we have that $V[G][H_0][H^{>\kappa}]\subseteq W$, and $V[G][H_0][H^{>\kappa}]$ is a ground of $W$. Now, if $\kappa_0 <\kappa_1$ are the first two members of $S$ greater than $|\mathbb{P}|$, then $V[G][H_0][H^{>\kappa_1}_1]$  is a proper ground of $V[G][H_1][H_1^{>\kappa_0}]$, hance also a proper ground of $W$.
\end{proof}

Therefore, although no formal evidence may currently be provided, it is reasonable to conjecture, that, as a consequence of Theorem \ref{threshold}, no LCA of consistency strength lower than that of `there exists an extendible cardinal' will be sufficient to prove the existence of the core. However, more work is needed to shed further light on the issue.

\section{The Core: CH and Forcing Axioms}

\subsection{The Core and CH}

We know that, under certain assumptions on $T$, the core of the multiverse of $\mathsf{MV}_T$ exists, so it makes sense to find out what features it has and, based on these, what it may reveal to us about such \textit{undecidable} statements as CH. In particular, we would like to address the following question:

\begin{question}\label{questCH}
Does $\mathsf{MV}_T$ prove that the core satisfies the CH, where $T=\mathsf{ZFC}+$`there exists a class of extendible cardinals'? 
\end{question}

The question may, in fact, extend to any other statement $\varphi$ which is not decided by $\mathsf{ZFC}$. In particular, for each such $\varphi$, one may ask whether the core implies the truth or falsity of $\varphi$. 

The results which follow provide answers to such questions, and, overall, starkly expose the \textit{indeterminacy} of the core. We, first, outline the mathematical strategy for CH. 

If the core exists, then it satisfies GA, namely, it does not have any proper grounds. \cite{reitz2007} proved that every model of $\mathsf{ZFC}$ has a class-forcing extension which preserves any desired $V_\alpha$, satisfies V=HOD, and is a model of $\mathsf{ZFC}$+GA.\footnote{\cite{reitz2007}, Theorems 3.5, and 3.10, pp. 1308-1310.} By results in \cite{bagaria-poveda2021}, the class-forcing used to obtain the model preserves extendible cardinals. So, starting with a model $M$ satisfying $T=\mathsf{ZFC}+$`There is a proper class of extendible cardinals' we may class-force over it to obtain a model $M[H]$ of $T$ satisfying GA and, e.g., $\neg$CH. And then by forcing with $Coll(\omega, <Ord)^{M[H]}$ over $M[H]$ we obtain a model of $\mathsf{MV}_T$ whose core is $M[H]$ and satisfies $\neg$CH. 

Now we show how this strategy can be extended to all $\Sigma_2$ set-forceable statements $\varphi$: 

\begin{theorem}\label{sigma2}
Let $\varphi$ be a $\Sigma_2$-statement (with parameters) that can be forced by set forcing. Assume there is a proper class of extendible cardinals. Then in some class-forcing extension of $V$ that preserves extendible cardinals the statement $\varphi$ holds in the core of $\mathsf{MV}$'s multiverse built around the extension itself.
\end{theorem}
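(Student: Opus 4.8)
The plan is to force $\varphi$ by the given set forcing, then apply Reitz's class forcing to reach the Ground Axiom while preserving both a suitable initial segment of the universe and a proper class of extendibles, and finally invoke Proposition \ref{coreexists} to identify the core of the resulting multiverse with the top world itself. Concretely, let $\mathbb{P}$ be a set forcing with $V[g]\models\varphi$ for some $\mathbb{P}$-generic $g$. Since there is a proper class of extendible cardinals and $|\mathbb{P}|$ is a set, by the L\'evy--Solovay phenomenon every extendible above $|\mathbb{P}|$ remains extendible in $V[g]$; hence $V[g]$ still has a proper class of extendibles, and the parameters of $\varphi$ lie below some extendible $\lambda$ of $V[g]$ with $|\mathbb{P}|<\lambda$.

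Next I would apply Reitz's class forcing over $V[g]$ to obtain $N:=V[g][H]$ satisfying $\mathsf{ZFC}+\mathrm{GA}$ (indeed $V=\mathrm{HOD}$), arranged to preserve the chosen level $V_\lambda^{V[g]}$ (Reitz's construction \cite{reitz2007} allows preserving any prescribed $V_\alpha$), and, by \cite{bagaria-poveda2020}, preserving all extendibles. Thus $N$ has a proper class of extendibles, $\lambda$ is still extendible in $N$, and $V_\lambda^N=V_\lambda^{V[g]}$. The whole passage $V\rightsquigarrow N$ is a single class-forcing extension preserving extendibles, as the statement requires.

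The crux, and the step I expect to be the main obstacle, is showing $N\models\varphi$, i.e.\ that the $\Sigma_2$ statement survives the passage to the much larger extension $N$. Merely preserving $V_\lambda$ is not enough on its own: the $\Pi_1$ matrix of a $\Sigma_2$ formula quantifies over all sets, and $N$ contains many new high-rank sets that could in principle falsify a witness found below $\lambda$. This is exactly where extendibility is essential. Because $\lambda$ is extendible (hence supercompact, hence $\Sigma_2$-reflecting), one has $V_\lambda\prec_{\Sigma_2}V$ in any model in which $\lambda$ is extendible; applying this in $V[g]$ gives $V_\lambda^{V[g]}\models\varphi$, and applying it in $N$ gives $V_\lambda^N\prec_{\Sigma_2}N$. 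Since $V_\lambda^N=V_\lambda^{V[g]}\models\varphi$ and $\varphi$ is $\Sigma_2$, $\Sigma_2$-elementarity yields $N\models\varphi$. The key is that the $\Sigma_2$-correctness of $\lambda$ is \emph{recovered} in $N$ from the preserved large-cardinal property, rather than from any attempt to transport the witness directly; this is what makes the argument go through for arbitrary set-forceable $\Sigma_2$ statements and not only for low-rank cases such as $\neg$CH.

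Finally, since $N\models\mathrm{GA}$, the only ground of $N$ is $N$ itself, so the mantle of $N$ is $N$. Building the $\mathsf{MV}_T$ multiverse around $N$ via $Coll(\omega,<Ord)^N$, Proposition \ref{coreexists} applies, because $N$ has a proper class of extendibles, and it identifies the core with the common mantle of the worlds, which for the top world $N$ is $N$. Hence the core is $N$, and $N\models\varphi$, so $\varphi$ holds in the core, as desired.
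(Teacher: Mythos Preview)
Your argument is correct and follows the same overall strategy as the paper: force $\varphi$ by set forcing, then class-force the Ground Axiom while preserving extendibles and a suitable initial segment, so that the resulting model is its own mantle and hence the core of the multiverse built around it.

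The one substantive difference lies in how the $\Sigma_2$-correctness of the preserved level is secured in the final model $N$. The paper picks $\kappa\in C^{(1)}$ with $V_\kappa\models\varphi$, then first forces GCH above $\kappa$ and only afterwards forces GA; it then argues directly that $\kappa$ remains in $C^{(1)}$ in $N$ via the characterisation ``$\kappa\in C^{(1)}$ iff $\kappa$ is uncountable and $V_\kappa=H_\kappa$''. You instead take the preserved level to be an \emph{extendible} $\lambda$, skip the GCH step, and recover $V_\lambda\prec_{\Sigma_2} N$ from the fact that $\lambda$ is still extendible in $N$ (by Bagaria--Poveda), hence $\Sigma_2$-reflecting. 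Your route uses a stronger property of the reflecting point but buys a cleaner argument that avoids the intermediate GCH forcing; the paper's route needs only a $C^{(1)}$ point but pays with the extra class-forcing step and the separate verification that membership in $C^{(1)}$ is preserved.
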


\begin{proof}
First force $\varphi$ by set forcing. In the forcing extension let $\kappa \in C^{(1)}$ be such that $V_\kappa \models \varphi$. Then force the GCH above $\kappa$, in the usual way using class forcing, so that $V_\kappa$ is not changed. As in \cite{reitz2007}, we may further class-force to get a model of GA, so that $V_\kappa$ is still unchanged. By results contained in \cite{tsaprounis2014} and  \cite{bagaria-poveda2021}, both class-forcing notions  preserve extendible cardinals.\footnote{\cite{tsaprounis2014}, Lemma 4.3, p. 113 and \cite{bagaria-poveda2021}, Theorem 5.4, p. 15, and Theorem 7.5, p. 23-4.} Hence, since the final extension $M$ satisfies the GA, if $G$ is $Coll(\omega , <Ord)^M$-generic over $M$, then  the core of the $\mathsf{MV}$ multiverse given by $M^G$, which exists because in $M$  there exists a proper class of extendible cardinals, is $M$ itself. Also, since $V_\kappa$ has not been changed and satisfies $\varphi$, in $M$ we have that $V_\kappa \models \varphi$ and, moreover, $\kappa \in C^{(1)}$, because being in $C^{(1)}$ is characterized by being uncountable and satisfying $V_\kappa =H_\kappa$. Hence, $M\models \varphi$.
\end{proof}

\begin{corollary}
If $T=\mathsf{ZFC}+$`$\mbox{There exists a proper class of extendible cardinals}$' is consistent, then so is $\mathsf{MV}_T$ plus that the core  satisfies the CH, or $\neg$CH, with any possible value of the size of the continuum. 
\end{corollary}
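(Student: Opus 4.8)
The plan is to read the corollary as three consistency assertions --- one for CH, one for $\neg$CH, and a parametrised one for each admissible value of the continuum --- and to obtain all of them as immediate instances of Theorem \ref{sigma2}. The only genuinely new thing to check is that the relevant statements fall within the scope of that theorem, i.e.\ that each is (a) expressible by a $\Sigma_2$ formula, possibly with parameters, and (b) forceable by set forcing. Granting this, Theorem \ref{sigma2} produces, from any model of $T$ (which exists once $T$ is consistent), a class-forcing extension $M$ that preserves the proper class of extendible cardinals, is itself the core of the multiverse $M^G$ it generates, and satisfies the statement in question; the resulting $M^G\models\mathsf{MV}_T$ then witnesses the desired consistency, equivalently, via the Translation Function (Theorem \ref{Transl}), the consistency of $T$ together with the translate of `the core satisfies $\varphi$'.

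First I would verify the complexity bookkeeping, allowing the relevant cardinals as parameters, which Theorem \ref{sigma2} permits. CH can be written as $2^{\aleph_0}\le\aleph_1$, i.e.\ as the existence of an injection of $\mathcal{P}(\omega)$ into $\omega_1$; since the clause `$\mathrm{dom}(f)=\mathcal{P}(\omega)$' and the $\Pi_1$ definition of $\omega_1$ contribute universal quantifiers inside an outer existential, CH is $\Sigma_2$. Symmetrically, $\neg$CH is the existence of an injection of $\omega_2$ into $\mathcal{P}(\omega)$, which, once $\omega_2$ is taken as a parameter, has a bounded matrix and so is $\Sigma_1$, a fortiori $\Sigma_2$. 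For a prescribed cardinal $\lambda$ treated as a parameter, `$2^{\aleph_0}=\lambda$' is the conjunction of the $\Sigma_1$ statement `$2^{\aleph_0}\ge\lambda$' (witnessed by an injection $\lambda\to\mathcal{P}(\omega)$) and the $\Sigma_2$ statement `$2^{\aleph_0}\le\lambda$' (as for CH), hence itself $\Sigma_2$. Thus all three statements meet the complexity hypothesis of Theorem \ref{sigma2}.

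Next I would recall that each statement is set-forceable, with `any possible value' understood as any $\lambda$ satisfying König's constraint $\cf(\lambda)>\omega$: CH is forced by the $\sigma$-closed collapse $\mathrm{Coll}(\omega_1,2^{\aleph_0})$, which adds no reals and makes the continuum $\aleph_1$; while $\neg$CH and, more generally, `$2^{\aleph_0}=\lambda$' are forced by the Cohen forcing $\mathrm{Add}(\omega,\lambda)$ over a model of GCH. Feeding each such $\varphi$ into Theorem \ref{sigma2} then gives the result: the theorem first forces $\varphi$ by set forcing, fixes $\kappa\in C^{(1)}$ with $V_\kappa\models\varphi$, and finally class-forces GCH above $\kappa$ together with the Ground Axiom while preserving both $V_\kappa$ and the extendibles, so that in the resulting model $M$ --- which is the core of $M^G$ --- we still have $V_\kappa\models\varphi$, whence $M\models\varphi$.

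The step I expect to require the most care is not conceptual but the correct reflection of the parameter: for `$2^{\aleph_0}=\lambda$' we need $\lambda<\kappa$ and $V_\kappa$ to compute $\mathcal{P}(\omega)$ and the continuum correctly. This is automatic, because membership in $C^{(1)}$ forces $V_\kappa=H_\kappa$ and hence $\kappa=\beth_\kappa$, a strong limit lying above $2^{\aleph_0}$; and since GCH is forced only \emph{above} $\kappa$, the value of the continuum is left untouched. Beyond this, there is no real obstacle: the substantive work --- preservation of extendibles under the coding and Ground-Axiom class forcings, and the identification of $M$ with the core --- is already discharged in Theorem \ref{sigma2}, so the corollary is essentially its specialisation to the family $\{\mathrm{CH},\ \neg\mathrm{CH},\ 2^{\aleph_0}=\lambda\}$.
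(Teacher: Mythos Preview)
Your proposal is correct and follows exactly the approach the paper intends: the corollary is stated in the paper without proof, as an immediate specialisation of Theorem \ref{sigma2} to the statements CH, $\neg$CH, and $2^{\aleph_0}=\lambda$, and you have simply supplied the routine verification that each of these is a set-forceable $\Sigma_2$ sentence (with parameters). Your care about the parameter $\lambda$ surviving into $V_\kappa$ is appropriate and handled correctly, since the class forcings in the proof of Theorem \ref{sigma2} leave $V_\kappa$ untouched.
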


Besides the CH and $\neg$CH there are many other relevant $\Sigma_2$ statements that are set-forceable, and therefore consistently hold in the core of the $\mathsf{MV}_T$ multiverse. We give next a couple more  examples. 

\begin{corollary}
If $T$ as above is consistent, then so is $\mathsf{MV}_T$ plus that the core  satisfies $\square_{\omega_2}$, or its negation. 
\end{corollary}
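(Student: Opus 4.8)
The plan is to reduce the statement to Theorem \ref{sigma2}, exactly as in the proof of the preceding corollary for $\mathrm{CH}$. It suffices to check that each of $\square_{\omega_2}$ and $\neg\square_{\omega_2}$ is a $\Sigma_2$ statement (with $\omega_2$ and $\omega_3=\omega_2^+$ allowed as parameters) that can be forced by set forcing, and then to apply Theorem \ref{sigma2} to each of them separately. For $\varphi\in\{\square_{\omega_2},\neg\square_{\omega_2}\}$ the theorem produces a class-forcing extension, preserving the proper class of extendibles, whose multiverse $M^G$ has a core satisfying $\varphi$; instantiating this over a countable model of $T$ yields the two desired consistency statements.

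First I would settle the logical complexity. Let $\Phi(S,\omega_2,\omega_3)$ abbreviate ``$S$ is a $\square_{\omega_2}$-sequence'', i.e.\ $S=\langle C_\alpha:\alpha<\omega_3,\ \alpha\ \text{limit}\rangle$ where each $C_\alpha$ is club in $\alpha$ with $\ot(C_\alpha)\le\omega_2$ and $C_\beta=C_\alpha\cap\beta$ whenever $\beta$ is a limit point of $C_\alpha$. Every quantifier occurring in $\Phi$ is bounded by $\omega_3$, so $\Phi$ is $\Delta_0$ in the parameters $\omega_2,\omega_3$. Consequently $\square_{\omega_2}\equiv\exists S\,\Phi(S,\omega_2,\omega_3)$ is $\Sigma_1$, while $\neg\square_{\omega_2}\equiv\forall S\,\neg\Phi(S,\omega_2,\omega_3)$ is $\Pi_1$; both are therefore $\Sigma_2$, as required by Theorem \ref{sigma2}. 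Equivalently, and parameter-free, any candidate square sequence lies in $H_{\omega_4}$, so for every $\kappa\in C^{(1)}$ with $\kappa>\omega_3$ one has $\square_{\omega_2}\leftrightarrow(V_\kappa\models\square_{\omega_2})$; this is precisely the $\Sigma_2$-via-$C^{(1)}$ shape that the proof of Theorem \ref{sigma2} consumes.

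Next I would exhibit the two set forcings. To force $\square_{\omega_2}$, use the standard poset $\mathbb{S}$ adding a $\square_{\omega_2}$-sequence by approximations (conditions are coherent sequences of successor length below $\omega_3$): assuming the GCH it has size $\omega_3$ and is $<\!\omega_3$-strategically closed, hence preserves cardinals, cofinalities and the GCH. Being set forcing of size below every extendible, it preserves the whole proper class of extendibles by the L\'evy--Solovay theorem. To force $\neg\square_{\omega_2}$, I would collapse a weakly compact cardinal to become $\omega_3$: take $\lambda$ extendible (so in particular weakly compact), arrange the GCH below $\lambda$, and force with $\mathrm{Coll}(\omega_2,<\lambda)$. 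In the extension $\lambda=\omega_3$ has the tree property, so there is no $\omega_3$-Aronszajn tree; but $\square_{\omega_2}$ would yield a special $\omega_3$-Aronszajn tree (equivalently, a non-reflecting stationary subset of $\omega_3$), so $\square_{\omega_2}$ fails. This collapse is set forcing destroying only $\lambda$ among the extendibles, leaving the proper class of extendibles above $\lambda$ intact. With both verifications in hand, Theorem \ref{sigma2} delivers the corollary.

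The part that needs the most care is the complexity bookkeeping for $\neg\square_{\omega_2}$: its $\Sigma_2$-ness rests on reading $\omega_2,\omega_3$ as parameters (or on the $H_{\omega_4}$-locality of square sequences), since unfolding the definitions of $\omega_2$ and $\omega_3$ naively would raise the quantifier complexity. The $\square_{\omega_2}$-adding direction is routine. For $\neg\square_{\omega_2}$ the only genuine work beyond citing classical facts --- that the collapse of a weakly compact forces the tree property (equivalently, stationary reflection) at $\omega_3$, and that $\square_{\omega_2}$ refutes both --- is the cardinal-arithmetic preparation ensuring $\mathrm{Coll}(\omega_2,<\lambda)$ behaves well and preserves a proper class of extendibles.
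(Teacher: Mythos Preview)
Your proof is correct and follows the same template as the paper: verify that both $\square_{\omega_2}$ and its negation are $\Sigma_2$ (with parameters) and set-forceable, then invoke Theorem \ref{sigma2}. The only substantive difference is in how you obtain $\neg\square_{\omega_2}$: the paper simply cites that collapsing a Mahlo cardinal to $\omega_3$ via $Coll(\omega_2,<\kappa)$ kills $\square_{\omega_2}$ (the optimal hypothesis), whereas you collapse a weakly compact to get the tree property at $\omega_3$ and derive the failure of $\square_{\omega_2}$ from that --- a slightly heavier hammer, but perfectly valid here since extendibles are abundantly available.
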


\begin{proof}
$\square_{\omega_2}$ is a forcible, by countably-directed and $\omega_1$-strategically closed forcing, $\Sigma_1$ statement, with $\omega_1$ and $\omega_2$ as parameters. Also, its negation is forced by $Col(\omega_1, <\kappa)$, with $\kappa$ a Mahlo cardinal.
\end{proof}

The well-known forcing axiom $\MA_{\aleph_1}$ is also equivalent to a $\Sigma_2$ statement, with $\aleph_1$ as a parameter. Thus,  Theorem \ref{sigma2} yields the following: 

\begin{corollary}\label{MAaleph1}
If $T$ as above is consistent, then so is $\mathsf{MV}_T$ plus that the core  satisfies $\MA_{\aleph_1}$. 
\end{corollary}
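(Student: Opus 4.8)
The plan is to obtain the statement as a direct application of Theorem \ref{sigma2} to the sentence $\varphi := \MA_{\aleph_1}$, so that all the real work is already packaged in that theorem and only two hypotheses remain to be checked: that $\MA_{\aleph_1}$ is a $\Sigma_2$ statement with $\aleph_1$ as its sole parameter, and that it is forceable by set forcing. Once both are verified, the theorem produces, from any model of $T$, a class-forcing extension (still modelling $T$, since extendible cardinals are preserved) whose associated core satisfies $\MA_{\aleph_1}$, which is exactly the desired relative consistency.

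For the complexity, I would argue that $\MA_{\aleph_1}$ is absolute between $V$ and $H_{\aleph_2}$. By the usual reduction it suffices, in the statement of $\MA_{\aleph_1}$, to quantify over ccc posets whose underlying set is (a subset of) $\omega_1$ and over families of $\aleph_1$ dense subsets of such a poset; any filter meeting these $\aleph_1$ dense sets is itself a subset of the poset and hence also lies in $H_{\aleph_2}$. Consequently $\MA_{\aleph_1}$ holds iff $H_{\aleph_2}\models \MA_{\aleph_1}$, and a statement of the form ``$H_{\aleph_2}\models\psi$'' is, by the standard characterisation of the L\'evy hierarchy (the $\Sigma_2$ statements being exactly those equivalent to $\exists\kappa\,(H_\kappa\models\theta)$), a $\Sigma_2$ assertion whose only parameter is $\aleph_1$ (equivalently $\omega_1$). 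Pinning down that the complexity is \emph{exactly} $\Sigma_2$, and no higher, via this reduction to witnesses in $H_{\aleph_2}$, is the one point that requires care; everything else is routine.

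For forceability, I would invoke the Solovay--Tennenbaum theorem: a finite-support ccc iteration of length $\omega_2$ forces $\MA$, in particular $\MA_{\aleph_1}$, while preserving cardinals, so $\MA_{\aleph_1}$ is a $\Sigma_2$ statement forceable by set forcing. With both hypotheses in hand, Theorem \ref{sigma2} applies verbatim and yields a class-forcing extension $M$ of $V$, preserving the proper class of extendible cardinals, whose associated multiverse $M^G$ has core $M$ satisfying $\MA_{\aleph_1}$; the consistency of $T$ plus ``the core of $\mathsf{MV}_T$ satisfies $\MA_{\aleph_1}$'' follows. The only thing to watch is that the parameter $\aleph_1$ is interpreted correctly throughout, but this is automatic: in the proof of Theorem \ref{sigma2} the reflecting cardinal $\kappa\in C^{(1)}$ is uncountable with $V_\kappa=H_\kappa$, so $\aleph_1$ and $\aleph_2$ are computed below $\kappa$ inside the preserved initial segment $V_\kappa$, and hence agree in $V_\kappa$ and in the final model $M$. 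Since this is a corollary, there is no genuine obstacle beyond the complexity bookkeeping, which is the step I would present most explicitly.
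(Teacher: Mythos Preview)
Your proposal is correct and follows exactly the paper's approach: the paper simply notes that $\MA_{\aleph_1}$ is a $\Sigma_2$ statement with parameter $\aleph_1$ and then invokes Theorem~\ref{sigma2}, which is precisely what you do, only with the complexity computation and the Solovay--Tennenbaum forceability spelled out in more detail than the paper bothers to record.
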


\subsection{The Core and Strong Forcing Axioms}

Corollary \ref{MAaleph1} helps us to introduce, in fuller generality, the examination of the behaviour of the core with respect to forcing axioms. Now, what has neatly come to surface is that the core of $\mathsf{MV}_T$, where $T$ may, and even may not, have extendible cardinals, is also consistent with \textit{strong} forcing axioms. Corollary 3.8 of \cite{reitz2007} already proves that the Proper Forcing Axiom (PFA) is consistent with the GA.\footnote{\cite{reitz2007}, p. 1308.} We extend this result also to MM, MM$^{++}$, etc. 

Recall that MM$^{++}$ states that for every poset $\Pe$ that preserves stationary subsets of $\omega_1$, every collection $\{ D_\alpha :\alpha <\omega_1\}$ of dense open subsets of $\Pe$, and every collection $\{ \sigma_\alpha :\alpha <\omega_1\}$ of $\Pe$-names for stationary subsets of $\omega_1$, there exists a filter $G\subseteq \Pe$ that is generic for $\{ D_\alpha :\alpha <\omega_1\}$ (i.e., $G\cap D_\alpha \ne \emptyset$, all $\alpha$), and such that $\sigma_\alpha [G]$ is a stationary subset of $\omega_1$, all $\alpha$. It is well-known that MM$^{++}$ can be forced, assuming the existence of a supercompact cardinal.

\begin{theorem}
If $\mathsf{ZFC}$ plus the existence of a supercompact cardinal is consistent, then so is MM$^{++}$ plus GA.
\end{theorem}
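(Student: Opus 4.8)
The plan is to follow the two-step template already used for Theorem \ref{sigma2}, and before that by Reitz for the consistency of PFA with GA: first force MM$^{++}$ to hold, and then class-force the Ground Axiom on top by a forcing chosen closed enough to leave MM$^{++}$ intact. Concretely, starting from a model $V$ with a supercompact cardinal $\kappa$, I would first run the standard revised-countable-support iteration of stationary-set-preserving posets, guided by a Laver function for $\kappa$, to obtain an extension $V_1$ in which MM$^{++}$ holds and $\kappa = \aleph_2^{V_1}$. The extra feature I would extract from this step is \emph{indestructibility}: it is known that the standard model of MM$^{++}$ can be arranged so that MM$^{++}$ is preserved by every $\aleph_2$-directed-closed forcing (cf.\ Larson's indestructibility results, and the preservation theorems of K\"onig and Yoshinobu). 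This is precisely the property that will make the coding harmless.

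The second step is to force GA over $V_1$ by a Reitz-style class forcing $\mathbb{C}$ that codes every set of ordinals into a definable global pattern --- for instance the behaviour of the power-set function, as in \cite{reitz2007} and as in the proof of Theorem \ref{threshold} above --- but with all coding carried out on a class of cardinals $\geq \aleph_2$ (indeed one may begin as high as one likes). Because each coding poset is then $\geq\!\aleph_2$-directed-closed and the coding is assembled with Easton support, the whole class forcing $\mathbb{C}$ is $\aleph_2$-directed-closed; in particular it adds no new subsets of $\omega_1$ and preserves $H_{\aleph_2}$, so it neither alters the stationary subsets of $\omega_1$ nor disturbs the value of the continuum forced by MM$^{++}$. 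Coding on high cardinals still captures every set, so Reitz's argument goes through and $V_2 = V_1^{\mathbb{C}}$ satisfies GA (no nontrivial ground can omit the coded pattern, so every ground is all of $V_2$). Finally, since $\mathbb{C}$ is $\aleph_2$-directed-closed, indestructibility gives $V_2 \models {}$MM$^{++}$, whence $V_2 \models {}$MM$^{++}$ $+$ GA; as $V_2$ is a forcing extension of a model with a supercompact, the consistency implication follows.

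The main obstacle is the preservation of MM$^{++}$ through $\mathbb{C}$, and it has two facets. The first is the forcing-axiom input itself --- that MM$^{++}$ survives $\aleph_2$-directed-closed forcing --- which is where the indestructibility/preservation theorem does the real work and where I would be most careful to state the hypotheses correctly, since naive closure alone need not preserve forcing axioms. The second is that $\mathbb{C}$ is a proper class forcing whereas the preservation theorem is about set forcing. I would handle this by the usual factoring argument: any stationary-set-preserving poset $\mathbb{P}$, together with the $\aleph_1$ dense sets and $\aleph_1$ names that might witness a failure of MM$^{++}$ in $V_2$, is a set appearing at some bounded stage, so one factors $\mathbb{C} \cong \mathbb{C}_{<\gamma} \times \mathbb{C}_{\geq\gamma}$ with the tail $\mathbb{C}_{\geq\gamma}$ highly closed, absorbs the relevant instance into a set-sized $\aleph_2$-directed-closed forcing, and applies indestructibility there --- after first checking, as usual, that $\mathbb{C}$ is a tame (pretame) class forcing, so that $V_2 \models {}$ZFC and the forcing relation is well behaved.
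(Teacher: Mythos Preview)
Your proposal is correct and follows essentially the same route as the paper: force $\mathrm{MM}^{++}$, then run a Reitz-style $\omega_2$-directed-closed class coding to obtain GA, and preserve $\mathrm{MM}^{++}$ by factoring the class forcing into a set-sized $\omega_2$-directed-closed piece (where Larson's preservation result applies) and a highly closed tail. The only point the paper makes more explicit than you do is the verification that a poset which is stationary-set-preserving in the final model $V_2$ is already stationary-set-preserving at the intermediate stage $V_1^{\mathbb{C}_{<\gamma}}$ (this uses that the tail adds no new subsets of $\omega_1$ together with a short absoluteness argument for the forcing relation); you fold this into ``the usual factoring argument,'' which is fine, but it is the one place a referee might ask you to unpack.
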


\begin{proof}
Let $V$ satisfy $\mathsf{ZFC}$ plus the existence of a supercompact cardinal. Force over $V$ to obtain a model of $\mathsf{ZFC}$ plus MM$^{++}$. Call this model $V[G]$. Then force with an $\omega_2$-directed-closed $ORD$-length iteration $\Pe$ over $V[G]$ to obtain a model $V[G][H]$ of GA (as in \cite{reitz2007}).  We claim that MM$^{++}$ holds in $V[G][H]$. For suppose  $\tau$ is a $\Pe$-name for a poset that preserves stationary subsets of $\omega_1$,  $\{ \dot{D}_\alpha : \alpha <\omega_1 \}$ is a $\Pe$-name for a collection of dense open subsets of $\tau$, and $\{ \dot{\sigma}_\alpha :\alpha <\omega_1\}$ is a $\Pe$-name for a collection of $\tau$-names for stationary subsets of $\omega_1$. Let $\lambda$ be a large enough cardinal such that $\tau$ and $\langle \dot{D}_\alpha : \alpha <\omega_1 \rangle$ and $\{ \dot{\sigma}_\alpha :\alpha <\omega_1\}$ are $\Pe_\lambda$-names. Since $\Pe_\lambda$ is $\omega_2$-directed-closed, arguing similarly as in \cite{larson2000}, we can show that $\Pe_\lambda$ preserves MM$^{++}$. 

We claim that in $V[G][H_\lambda]$, the poset $\tau[H_\lambda]$ preserves stationary subsets of $\omega_1$: For suppose $S\subseteq\omega_1$ is stationary and $\dot{C}$ is a $\tau[H_\lambda]$-name for a club subset of $\omega_1$. Since the remaining part of the iteration does not add any new subsets of $\omega_1$, $S$ is also stationary in $V[G][H]$. Moreover, since being a club is absolute for transitive models, and every $\tau[H_\lambda]$-generic filter over $V[G][H]$ is also $\tau[H_\lambda]$-generic over $V[G][H_\lambda]$, we have that in $V[G][H]$, $\dot{C}$ is a $\tau[H_\lambda]$-name for a club subset of $\omega_1$. Hence, since in $V[G][H]$ the poset $\tau[H_\lambda]$ preserves stationary subsets of $\omega_1$, we have that $\Vdash_{\tau[H]}``\check{S}\cap \dot{C}\ne \emptyset"$. But the latter is absolute for transitive models, and so it holds in $V[G][H_\lambda]$.

Moreover, in $V[G][H_\lambda]$, $\{\dot{\sigma}_\alpha[H_\lambda] :\alpha <\omega_1\}$ is a collection of $\tau$-names for stationary subsets of $\omega_1$. Since MM$^{++}$ holds in $V[G][H_\lambda]$, there exists a filter $F\subseteq \tau[H_\lambda]$ that is generic for the collection $\{ \dot{D}_\alpha [H_\lambda]:\alpha <\omega_1\}$, and such that $\dot{\sigma}_\alpha[H_\lambda][F]$ is stationary. By absoluteness this is also true in $V[G][H]$. This shows that MM$^{++}$ holds in $V[G][H]$.
\end{proof}

It follows from the theorem above that if $T=\mathsf{ZFC}$+`there exists a supercompact cardinal' is consistent, then so is that the core of $\mathsf{MV}_T$ satisfies MM$^{++}$. Also, since the class-iteration $\Pe$ that forces the GA preserves extendible cardinals,\footnote{See, again, \cite{bagaria-poveda2021}, Lemma 7.4 and Theorem 7.5.} if the theory $T=\mathsf{ZFC}+$`there exists a proper class of extendible cardinals' is consistent, then so is $\mathsf{MV}_T$ plus that the core satisfies MM$^{++}$. Moreover, in both cases, by results in \cite{aspero-schindler2019}, the core also satisfies Woodin's $(\ast)$ axiom.

\section{Probing Steel's Programme: The Core as Ultimate-$L$}

Let's take stock. All the results illustrated in sections 3 and 4 show that, assuming a proper class of extendible cardinals, the core of the multiverse of $\mathsf{MV}$ exists, but is still a highly \textit{indeterminate} object. In particular, we have seen that the core may satisfy any of the strongest known forcing axioms, all of which imply that the continuum has size $\aleph_2$, and it may also satisfy CH, and, for that matter, any other $\Sigma_2$ set-forceable statement, with parameters  (Theorem \ref{sigma2}). 

Two immediate considerations are in order. The first is that, based on the provable, under certain assumptions, existence of the core of the multiverse of $\mathsf{MV}$, the Universist may now be licensed to shift to the Core Universist view that $V$ \textit{is} the core, by settling on a theory, in $\mathcal{L}_{\in}$, which states this explicitly, that is, a theory which contains $V=\mathcal{C}$ as an axiom (henceforth, $\mathcal{C}$ will be our designated symbol for the core); in the next section we will examine this possibility in more detail.\footnote{See also a  discussion of this in \cite{maddy-meadows2020}, p. 146.} The second one is that, as a consequence of the persistent indeterminacy of the core also in the presence of LCs, even if the Core Universist decided to settle on $V=\mathcal{C}$ as the correct extension of $\mathsf{ZFC}$+LCs, she wouldn't still be able to fix the features of the core, and thus, the truth-value, in $\mathcal{L}_{\in}$, of the undecidable statements. 

So, it is time for Steel's Programme to kick in, and, in the next subsections, we will examine its execution and assess its prospects.

\subsection{Ultimate-$L$ and $\mathsf{MV}$}

In recent years, as a consequence of the far-reaching developments of the \textit{inner model programme}, the possibility of the existence of a canonical inner model for a  \textit{supercompact} cardinal is emerging. Woodin has called such a model Ultimate-$L$, insofar as this model would incorporate all features of an $L$-like inner model, as well as \textit{all} LCs, thus leading to the completion of the inner model programme itself.\footnote{See footnote \ref{Woodin}. For an accessible overview of the inner model programme, see \cite{jensen1995}; for technical results, \cite{devlin1984}, or \cite{kanamori2009}.} If such a model exists, then one could say that it represents an `optimal' approximation of $V$, something which would justify viewing the axiom $V$=Ultimate-$L$ as the most natural extension of $\mathsf{ZFC}$, and \cite{steel2014} has precisely taken into account such a possibility.\footnote{Although, in that work, Woodin's Ultimate-$L$ Conjecture is not directly addressed. See further, subsection 5.2.} 

In this subsection, we will first be concerned with mathematical results about Ultimate-$L$ \textit{and} the core and, in the next one, we will articulate the Core Universist position based on these results. We start with  Woodin's definition of the axiom $V$=Ultimate-$L$.

\begin{definition}[$V$=Ultimate-$L$]
The axiom $V$=Ultimate-$L$ asserts:
\begin{enumerate}
    \item  There is a proper class of Woodin cardinals, \emph{and}
    
    \item For each $\Sigma_2$ sentence $\varphi$, if $\varphi$ holds in $V$, then there is a universally-Baire set $A\subseteq \mathbb{R}$ such that
    $$HOD^{L(A,\mathbb{R})}\models \varphi.$$
\end{enumerate}
\end{definition}

\noindent
Woodin has shown that $V$=Ultimate-$L$ implies the CH.\footnote{See \cite{woodin2017}, Introduction.} It also implies the Ground Axiom, i.e., that $V$ is not a set-generic extension of any inner model, and that $V$=HOD.

 Woodin has made the following conjecture. First, recall that an inner model $N$ is a \emph{weak extender model for the supercompactness of $\delta$}  if for every $\lambda \geq \delta$ there is a normal fine measure $\mathcal{U}$ on $\mathcal{P}_\delta  \lambda$ such that:
\begin{enumerate}
    \item $\mathcal{U}\cap N\in N$
    \item $\mathcal{P}_\delta \lambda \cap N \in \mathcal{U}$.
\end{enumerate}

\begin{conjecture}
[Woodin's Ultimate-$L$ Conjecture]\label{WoodinUlt}
Suppose that $\delta$ is an extendible cardinal. Then there is an inner model $N\subseteq HOD$ such that:
\begin{enumerate}
    \item $N$ is a weak extender model for the supercompactness of $\delta$.
    \item $N\models$`$V$=Ultimate-$L$'.
\end{enumerate}
\end{conjecture}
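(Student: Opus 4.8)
The plan would be to construct the desired inner model $N$ by the methods of the inner model programme, namely as a fine-structural extender model $L[\vec{E}]$ (more precisely a \emph{strategy mouse} that carries its own iteration strategy as part of its predicate), and then to verify the two required properties separately. First I would attempt to build a candidate model from a coherent sequence $\vec{E}$ of extenders certified by background extenders extracted from the extendibility of $\delta$, via a $K^c$-style construction relativised to the large-cardinal hypothesis. The aim of this step is a model that contains extenders witnessing the supercompactness of $\delta$ while remaining \emph{$L$-like}: satisfying condensation, $\mathsf{GCH}$, a suitably definable well-order of the reals, and the usual comparison-theoretic regularity.

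Granting iterability of the candidate, the second step would verify that $N$ is a weak extender model for the supercompactness of $\delta$: for each $\lambda \geq \delta$ one must read off from the extender sequence of $N$ a normal fine measure $\mathcal{U}$ on $\mathcal{P}_\delta \lambda$ with $\mathcal{U}\cap N\in N$ and $\mathcal{P}_\delta \lambda \cap N\in \mathcal{U}$; here Woodin's structural theory of weak extender models — in particular the universality and covering properties above $\delta$ — would be the main tool, showing that such an $N$ absorbs the supercompactness measures of $V$ on its own $\mathcal{P}_\delta \lambda$. The third step, obtaining $N\models{}$`$V$=Ultimate-$L$', would verify internally the two clauses of the axiom: that $N$ has a proper class of Woodin cardinals (inherited from the large cardinals of $V$ through the weak-extender-model absorption), and that for every $\Sigma_2$ sentence $\varphi$ true in $N$ there is a universally Baire $A\subseteq \mathbb{R}$ with $HOD^{L(A,\mathbb{R})}\models \varphi$; the latter would be established through the comparison and genericity-iteration machinery linking the canonical inner model to its derived model and the associated $HOD$, i.e.\ a ``Mouse Set''-type correspondence.

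I must be candid that this is the central \emph{open} problem of contemporary inner model theory, and that no proof is currently known; the decisive obstruction lies already in the first step. At the level of a supercompact — indeed already at the level of the \emph{long} extenders needed to witness supercompactness inside $N$ — the comparison process underlying both the construction and the iterability of extender models is not known to be well-behaved: the coiteration of two such mice need not terminate, and the iterability problem for iteration strategies at this level remains unsolved. Current fine-structural technology (hod mice, the core model induction, the Mouse Set Conjecture) reaches only well below a supercompact, so the hard part is not the verification of the weak-extender-model clause or of the internal Ultimate-$L$ axiom \emph{given} a suitable iterable model, but the construction of that iterable model in the first place. For this reason the statement is recorded as a Conjecture rather than a theorem, and the strategy sketched above is best read as the programme whose completion would establish it.
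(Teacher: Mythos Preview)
The statement is recorded in the paper as a \emph{Conjecture}, not a theorem, and the paper offers no proof whatsoever; it merely states Woodin's Ultimate-$L$ Conjecture and then uses it as a hypothesis in subsequent propositions. So there is no ``paper's own proof'' to compare your proposal against.

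You have correctly identified this: your final paragraph explicitly acknowledges that no proof is known, that the obstruction lies in the iterability of long-extender mice at the supercompact level, and that the statement is a conjecture. That is exactly the right assessment, and it matches the paper's treatment. Your sketch of the intended programme --- build a fine-structural strategy mouse via a $K^c$-type construction backgrounded by the extendibility of $\delta$, verify the weak-extender-model clauses via Woodin's universality and covering theory, and check the two clauses of $V=\mbox{Ultimate-}L$ internally through derived-model/HOD analysis --- is a reasonable high-level outline of what a proof would be expected to look like, and is broadly consonant with how Woodin and Steel describe the aims of the inner model programme. There is nothing to correct here beyond noting that, since the paper gives no argument, your proposal is not so much an alternative proof as an informed description of why the conjecture remains open.
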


\noindent
Now, crucially for our purposes, if the Ultimate-$L$ Conjecture holds, then letting $T$ be the theory $\mathsf{ZFC}+$`There exists a proper class of extendible cardinals' we have that $\mathsf{MV}_T$ proves that the core has an inner model $\mathcal{V}$ which is contained in HOD, and satisfies  $V$=Ultimate-$L$. Moreover, by Woodin's Universality Theorem,\footnote{\cite{woodin2017}, p. 16, Theorem 3.26.} $\mathcal{V}$ satisfies that there exists a proper class of extendible cardinals. However, $\mathcal{V}$ need not be a world itself, but, if it is so, then $\mathcal{V}$ is the \textit{core}. So, here follows the first  result of this subsection:  

\begin{proposition}
 Let $T$ be $\mathsf{ZFC}+$`There exists a proper class of extendible cardinals'. Then $\mathsf{MV}_T$ proves that the following are equivalent:
\begin{enumerate}
    \item There exists a world satisfying `$V$=Ultimate-$L$'
    \item $\mathcal{C}\models$`$V$=Ultimate-$L$'.
\end{enumerate}
\end{proposition}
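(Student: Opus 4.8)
The plan is to prove the two implications separately, with the direction $(2)\Rightarrow(1)$ being essentially trivial and $(1)\Rightarrow(2)$ carrying the mathematical content. For $(2)\Rightarrow(1)$, I would simply observe that the core $\mathcal{C}$ is itself a world of the multiverse (this is exactly the content of Proposition \ref{coreexists}, which shows $\mathcal{C}$ is the mantle, hence a ground, hence a world by Axiom 5). So if $\mathcal{C}\models$`$V$=Ultimate-$L$', then there is \emph{a fortiori} a world satisfying that axiom, namely $\mathcal{C}$ itself.

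The harder direction is $(1)\Rightarrow(2)$. Suppose some world $W$ satisfies `$V$=Ultimate-$L$'. The key structural facts I would exploit are, first, that $V$=Ultimate-$L$ implies $W$ has no non-trivial (set-generic) grounds, i.e. $W\models\mathrm{GA}$, as noted immediately after the Definition of $V$=Ultimate-$L$; and second, that by Proposition \ref{coreexists} the core $\mathcal{C}$ is contained in \emph{every} world, so $\mathcal{C}\subseteq W$, and moreover $\mathcal{C}$ is the mantle (hence a ground) of $W$. The plan is then to argue that $W$ having no proper grounds forces $\mathcal{C}=W$: since $\mathcal{C}$ is a ground of $W$ but $W$ satisfies the Ground Axiom, the only ground of $W$ is $W$ itself, whence $\mathcal{C}=W$. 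Therefore $\mathcal{C}\models$`$V$=Ultimate-$L$'.

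First I would set up the argument inside a model $M^G$ of $\mathsf{MV}_T$, as in the proofs of Lemma \ref{homo} and Proposition \ref{coreexists}, so that the core is available as the definable intersection of all worlds and Usuba's Theorem \ref{usuba} applies in each world. Then I would record that `$V$=Ultimate-$L$' is a statement about the internal structure of a world, and in particular entails in $W$ that $W$ is its own mantle. Combining this with the multiverse-level fact that $\mathcal{C}$, being contained in all worlds and equal to the common mantle, is a ground of $W$, the coincidence $\mathcal{C}=W$ is immediate.

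The main obstacle I anticipate is making precise the interaction between the \emph{external} multiverse notion of ``ground'' (a world $X$ such that $W=X[g]$ for some set-generic $g$, in the sense of $\mathsf{MV}$'s Axiom 5) and the \emph{internal} notion of ground witnessed by the Ground Axiom as a first-order statement inside $W$. One must check that $\mathcal{C}$, which is a \emph{world} and a ground of $W$ in the multiverse sense, corresponds to an internally-definable set-forcing ground of $W$, so that $W\models\mathrm{GA}$ genuinely rules it out unless $\mathcal{C}=W$. Here I would lean on the Downwards-Directedness results and on the definability of the core (Theorem \ref{defworld}) to ensure that the mantle computed internally in $W$ coincides with $\mathcal{C}$; Usuba's theorem guarantees the mantle of $W$ is in fact a set-forcing ground of $W$, which is precisely what the internal Ground Axiom addresses. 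Once this identification is secured, the equivalence follows without further computation.
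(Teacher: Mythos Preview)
Your proposal is correct and follows essentially the same route as the paper: after noting that $(2)\Rightarrow(1)$ is immediate because the core is a world, for $(1)\Rightarrow(2)$ you use that $V$=Ultimate-$L$ forces $W$ to have no proper set-forcing grounds, while Proposition~\ref{coreexists} makes $\mathcal{C}$ a ground of $W$, whence $\mathcal{C}=W$. The paper's proof is terser but identical in substance, and your care about the internal/external ground distinction is well-placed and correctly resolved via Usuba's theorem.
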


 \begin{proof}
Let $M^G$ be a model of $\mathsf{MV}_T$, and let $\mathcal{W}$ be a world in $M^G$ that satisfies $V$=Ultimate-$L$. Then $\mathcal{W}$ is a ground of some world. But $\mathcal{W}$ has no proper grounds, hence since $MV_T$ implies that the core exists and is the mantle, $\mathcal{W}$ must be contained in the core, and therefore it must be the core.
\end{proof}  

The following two propositions nail down the consistency of $V$=Ultimate-$L$ with $\mathsf{MV}_T$ (where $T=\mathsf{ZFC}$+`there exists a class of extendible cardinals'), assuming $V$=Ultimate-$L$'s own consistency.

\begin{proposition}\label{Ult-L}
 Let $T$ be $\mathsf{ZFC}+$`There exists a proper class of extendible cardinals'. If $T+$`$V=\mbox{Ultimate}$-$L$'  is consistent, then so is $\mathsf{MV}_T$ plus `$\mathcal{C}\models$ T+$V$=Ultimate-$L$'.
\end{proposition}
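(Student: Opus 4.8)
The plan is to use the complete semantics provided by the models $M^G$ (section \ref{MV}) together with the proposition immediately preceding this one, which says that any world satisfying `$V$=Ultimate-$L$' must in fact be the core. First I would unpack the consistency hypothesis: since $T+$`$V$=Ultimate-$L$' is consistent, the Completeness Theorem together with downward Löwenheim--Skolem yields a \emph{countable} model $M$ of $T+$`$V$=Ultimate-$L$' (possibly ill-founded, which is harmless, since the $M^G$ construction is explicitly set up to accommodate ill-founded $M$). I then fix a $G$ that is $Coll(\omega, <Ord)^M$-generic over $M$, which exists because $M$ is countable. As $M\models T$ and $T$ adjoins to $\mathsf{ZFC}$ the assertion that there is a proper class of extendible cardinals, the structure $M^G$ is a model of $\mathsf{MV}_T$.

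The crux is to identify the core of $M^G$ with $M$ itself. On the one hand, $M$ is a world of $M^G$: for each $\alpha\in Ord^M$ the model $M[G\restriction\alpha]$ is a set-forcing extension of $M$ (by $Coll(\omega,<\alpha)$), so $M$ is a ground of $M[G\restriction\alpha]$, and by definition the worlds of $M^G$ are exactly the grounds of such models. On the other hand, $M\models$`$V$=Ultimate-$L$', and, as recorded above, `$V$=Ultimate-$L$' implies that $V$ has no non-trivial grounds, i.e. $M$ satisfies the Ground Axiom. Hence $M$ is a world with no proper grounds. Now Proposition \ref{coreexists} guarantees that the core of $M^G$ exists (because every world, $M$ included, carries a proper class of extendibles) and equals the common mantle of all worlds; since $M$ is its own mantle, the preceding proposition applies and locates the core $\mathcal{C}$ as $M$.

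Finally I would read off the conclusion. Because $\mathcal{C}$ and $M$ are, under the inherited membership relation, literally the same structure, for every first-order sentence $\psi$ of $\mathcal{L}_\in$ one has $\mathcal{C}\models\psi$ if and only if $M\models\psi$; in particular $\mathcal{C}\models T+$`$V$=Ultimate-$L$'. Thus $M^G$ is a model witnessing $\mathsf{MV}_T$ together with the statement `$\mathcal{C}\models T+V$=Ultimate-$L$', so that theory is consistent, as required.

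I do not anticipate a serious obstacle; the one step deserving genuine care is the identification $\mathcal{C}=M$. It is tempting but insufficient to note only that $M\models$ GA: one must also invoke downward directedness of the grounds and Usuba's theorem (packaged in Proposition \ref{coreexists}) to know that the \emph{common} mantle of the whole multiverse is a single world, rather than merely that the mantle computed internally in $M$ is $M$, and then use the preceding proposition to pin that world down as $M$. A secondary point worth flagging is that `$V$=Ultimate-$L$', despite referring to universally Baire sets and to $HOD^{L(A,\mathbb{R})}$, is a genuine first-order sentence of $\mathcal{L}_\in$, so its truth is preserved when $M$ is viewed as the definable class $\mathcal{C}$ inside $M^G$; this is what legitimises the final absoluteness step.
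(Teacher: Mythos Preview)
Your proposal is correct and follows essentially the same route as the paper: take a countable model $M$ of $T+$`$V$=Ultimate-$L$', form $M^G$, observe that $M$ is a world of $M^G$ satisfying `$V$=Ultimate-$L$', and invoke the preceding proposition (together with Proposition \ref{coreexists}) to conclude that $M$ is the core. The paper's own proof is a two-line version of exactly this argument; your write-up simply makes explicit the points the paper leaves implicit (countability of $M$, why $M$ is a world, the Ground Axiom consequence of `$V$=Ultimate-$L$', and that $\mathcal{C}=M$ inherits $T$ as well).
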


\begin{proof}
Let $M$ be a model of $T+$`$V=\mbox{Ultimate}$-$L$'. Then for every $G\subseteq Coll(\omega , <Ord)^M$-generic filter over $M$, $M^G$ satisfies `$\mathcal{C}\models V=$Ultimate-$L$', by the last Proposition.
\end{proof}

\begin{proposition}
Assume the Ultimate-$L$ Conjecture. If $T=\mathsf{ZFC}+$`There exists a proper class of extendible cardinals' is consistent, then so is $\mathsf{MV}_T+$`$\mathcal{C}\models V$=Ultimate-$L$'.
\end{proposition}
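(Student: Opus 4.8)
The plan is to reduce the statement to Proposition \ref{Ult-L}, whose hypothesis is the stronger assumption that $T+$`$V$=Ultimate-$L$' is already known to be consistent. The entire content here is therefore to bridge the gap between merely assuming the consistency of $T$ and assuming the consistency of $T+$`$V$=Ultimate-$L$', and this bridge is exactly what the Ultimate-$L$ Conjecture, together with Woodin's Universality Theorem, supplies.

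First I would show, granting the Conjecture, that the consistency of $T$ implies the consistency of $T+$`$V$=Ultimate-$L$'. Fixing a model $M\models T$, the presence of an extendible cardinal $\delta$ in $M$ lets the Conjecture produce, inside $M$, an inner model $\mathcal{V}\subseteq HOD$ that is a weak extender model for the supercompactness of $\delta$ and satisfies `$V$=Ultimate-$L$'. By Woodin's Universality Theorem, $\mathcal{V}$ in turn satisfies that there is a proper class of extendible cardinals, so that $\mathcal{V}\models T+$`$V$=Ultimate-$L$'. Since $\mathcal{V}$ is a definable inner model of $M$ and relative consistency transfers across definable inner models of $\mathsf{ZFC}$, we obtain the consistency of $T+$`$V$=Ultimate-$L$'. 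At this point Proposition \ref{Ult-L} applies verbatim, yielding that $\mathsf{MV}_T$ plus `$\mathcal{C}\models T+V$=Ultimate-$L$' is consistent; in particular $\mathsf{MV}_T+$`$\mathcal{C}\models V$=Ultimate-$L$' is consistent, as desired.

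The step I expect to be delicate is not the reduction but the verification that $\mathcal{V}\models T$: the Conjecture by itself only delivers a weak extender model for the \emph{single} extendible $\delta$, whereas belonging to the base theory $T$ requires a whole \emph{proper class} of extendibles inside $\mathcal{V}$. It is exactly Woodin's Universality Theorem that closes this gap, by transferring the extendible cardinals of $V$ down into $\mathcal{V}$; without it one could extract only the consistency of $\mathsf{ZFC}+$`there is an extendible cardinal'$+$`$V$=Ultimate-$L$', which is too weak to feed into Proposition \ref{Ult-L}. A secondary, routine point is that `$\mathcal{V}\models T+V$=Ultimate-$L$' should be read as the usual relativized-truth scheme, so that the passage to the consistency of $T+$`$V$=Ultimate-$L$' is just the standard inner-model relative consistency argument.
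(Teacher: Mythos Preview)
Your proposal is correct and follows essentially the same route as the paper: start from a model $M\models T$, invoke the Ultimate-$L$ Conjecture at an extendible $\delta$ to produce an inner model $N$ (your $\mathcal{V}$) that is a weak extender model for the supercompactness of $\delta$ and satisfies $V$=Ultimate-$L$, then use Woodin's Universality Theorem to see that $N\models T$, so $N\models T+$`$V$=Ultimate-$L$'. The only cosmetic difference is that the paper finishes by directly forming $N^G$ with $G$ $Coll(\omega,<Ord)^N$-generic over $N$, whereas you package this last step as an appeal to Proposition~\ref{Ult-L}; these are the same argument.
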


\begin{proof}
Let $M$ be a model of $T$. Let $\delta$ be the least extendible cardinal in $M$.  By the Ultimate-$L$ Conjecture, $M$ has an inner model $N$ satisfying $V=$Ultimate-$L$ and which is a weak extender model for the supercompactness of $\delta$. By the Universality Theorem, in $N$ there is a proper class of extendible cardinals. Now let $G\subseteq Coll(\omega , <Ord)^N$ be generic over $N$. Then $N^G$ is a model of $\mathsf{MV}_T$ plus $\mathcal{C}\models$ $``V=$Ultimate-$L"$.
\end{proof}

However, also the negation of $V$=Ultimate-$L$, as shown by the following proposition, is consistent with $\mathsf{MV}_T$: 

\begin{proposition}\label{nUlt-L}
If the theory $T=\mathsf{ZFC}+$`There exists a proper class of extendible cardinals'+`$V$=Ultimate-$L$' is consistent, then so is $\mathsf{ZFC}+$`There exists a proper class of extendible cardinals'+`$\, \Ce \not \models V$=Ultimate-$L$'.
\end{proposition}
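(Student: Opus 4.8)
The plan is to reduce the claim to producing a model of $\mathsf{MV}_T$ whose core satisfies a statement that outright contradicts $V$=Ultimate-$L$, and the cleanest such statement is $\neg$CH: since $V$=Ultimate-$L$ implies CH, any world — and in particular the core — satisfying $\neg$CH automatically fails $V$=Ultimate-$L$. The hypothesis gives us, a fortiori, the consistency of $T$ alone, so I would first fix a model $M$ of $T+$`$V$=Ultimate-$L$'; in particular $M$ satisfies that there is a proper class of extendible cardinals, and hence the machinery of Theorem \ref{sigma2} is available over $M$.

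The key step is a direct application of Theorem \ref{sigma2} to the statement $\varphi \equiv \neg\mathrm{CH}$. This $\varphi$ is $\Sigma_2$ (with $\omega_2$ as a parameter, asserting the existence of an injection of $\omega_2$ into $\mathcal{P}(\omega)$) and is set-forceable, e.g.\ by adding $\aleph_2$ Cohen reals; this is precisely the instance of Theorem \ref{sigma2} already invoked in the Corollary on the value of the continuum. The theorem then yields a class-forcing extension $N$ of $M$ that preserves the proper class of extendible cardinals, satisfies the Ground Axiom, and has $V_\kappa \models \neg\mathrm{CH}$ for some $\kappa \in C^{(1)}$, whence $N \models \neg\mathrm{CH}$. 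Because $N$ satisfies the GA and carries a proper class of extendibles, for any $G$ that is $Coll(\omega, <Ord)^N$-generic over $N$ the model $N^G$ is a model of $\mathsf{MV}_T$ whose core $\mathcal{C}$ is $N$ itself (by Proposition \ref{coreexists}, a GA-model being its own mantle). Thus $\mathcal{C}=N\models \neg\mathrm{CH}$, so $\mathcal{C}\not\models$`$V$=Ultimate-$L$', and $N^G$ witnesses the consistency of $\mathsf{ZFC}+$`there exists a proper class of extendible cardinals'$+$`$\mathcal{C}\not\models V$=Ultimate-$L$'.

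The point I would flag as the main subtlety — and the reason the argument must route through the \emph{class}-forcing of Theorem \ref{sigma2} rather than a simple set forcing — is the following. One cannot take $M\models$`$V$=Ultimate-$L$' and merely force $\neg$CH by a set forcing $g$: since $V$=Ultimate-$L$ implies the Ground Axiom, $M$ has no proper grounds, and by Usuba's analysis (Theorem \ref{usuba}) the extension $M[g]$ has a smallest ground equal to its mantle. A standard intermediate-model argument shows this smallest ground is again a ground of $M$, so by the GA it collapses back to $M$; the core of the multiverse around $M[g]$ would then still be $M$ and would still satisfy $V$=Ultimate-$L$. The class forcing of Theorem \ref{sigma2} circumvents this precisely by manufacturing a \emph{new} GA-model whose $\Sigma_2$-theory — in particular, its value of the continuum — has been altered while extendibility is preserved, so that the new model is its own core and genuinely fails Ultimate-$L$.
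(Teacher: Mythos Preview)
Your argument is correct. Both your proof and the paper's rely on the same underlying mechanism --- produce, via class forcing, a model $N$ that (i) retains a proper class of extendible cardinals, (ii) satisfies the Ground Axiom, and (iii) fails $V$=Ultimate-$L$; then $N$ is its own mantle, hence the core of $N^G$. The difference lies only in how (iii) is secured. The paper simply performs Reitz's GA-forcing over $M$ and asserts that the resulting model $N$ fails $V$=Ultimate-$L$, leaving the reason implicit (the coding into the continuum function that underlies Reitz's iteration). You instead route through Theorem~\ref{sigma2} with $\varphi\equiv\neg\mathrm{CH}$, so that the failure of Ultimate-$L$ in $N$ is witnessed explicitly by the failure of CH, a consequence of $V$=Ultimate-$L$ already recorded in the paper. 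Your approach is thus slightly less direct but more self-contained: it invokes only facts stated and proved earlier in the paper, whereas the paper's one-line proof leans on the reader to see why Reitz's coding is incompatible with $V$=Ultimate-$L$. Your final paragraph, explaining why a mere set forcing of $\neg$CH over $M$ would not suffice (the mantle of $M[g]$ collapses back to $M$), is a genuinely useful clarification that the paper's terse proof does not make explicit.
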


\begin{proof}
Let $M$ be a model of $T$. Force GA by class forcing, as in \cite{reitz2007}, so that (by \cite{bagaria-poveda2021}) in the resulting model $N$ there is a proper class of extendible cardinals. If $G\subseteq Coll(\omega , <Ord)^N$ is generic over $N$, then $N^G$ is a model of $\mathsf{MV}_T$ plus $\mathcal{C}=N\not \models`V$=Ultimate-$L$'. 
\end{proof}

\subsection{The Role of Ultimate-$L$ within $\mathsf{MV}$}

From what Steel says in \cite{steel2014}, it seems reasonable to assume that he would encourage the adoption of $\mathsf{ZFC}$+LCs+$V$-Ultimate-$L$ as the Core Universist's ultimate theory. It is crucial to make clear how this could possibly happen. 

Steel declares $V$=Ultimate-$L$ to be the right axiom for the core for the following reasons:\footnote{\cite{steel2014}, p. 169.} (1.) it implies $V=\mathcal{C}$; (2.) it implies the existence of a `fine structure theory' for the core; (3.) it is consistent with all LCs.\footnote{\label{AxiomH}At the time of the writing of \cite{steel2014}, $V$=Ultimate-$L$ had yet to be formulated as a mathematical axiom. The axiom that Steel advocated in that work was, in fact, Woodin's Axiom $\mathsf{H}$, in turn, a version of $V$=HOD. Now, assuming $V$=Ultimate-$L$, then the Axiom $\mathsf{H}$ also holds. However, the reasons Steel adduces as evidence in favour of the Axiom $\mathsf{H}$ are exactly the same as he might adduce in favour of $V$=Ultimate-$L$.} But, as we know, in view of $\mathsf{MV}$'s own machinery and goals, these motivations may just be seen as auxiliary arguments for the acceptance of $V$=Ultimate-$L$, as our prospects to settle on $V$=Ultimate-$L$ as the right extension of $\mathsf{ZFC}$ entirely depend, by $\mathsf{MV}$'s own lights, on detecting its presence in $\mathcal{L}_{\mathsf{MV}}$. 

So we proceed to assess these prospects by resuming the narrative about the Core Universist's possible choices laid out at the beginning of section 5.\footnote{See also \cite{maddy-meadows2020}, section 6.}

As said there, the Universist might tentatively interpret the $\mathcal{L}_{\in}$-translate of the $\mathcal{L}_{\mathsf{MV}}$-sentence `the core exists' as suggesting (`practically implying') the view that $V$ \textit{is} the core. Syntactically, this would correspond to adopting the theory $\mathsf{ZFC}+$LCs+$V=\mathcal{C}$. So, our Universist could, provisionally, settle on such a theory as the ultimate extension of $\mathsf{ZFC}$. However, as we know, the core is a highly \textit{indeterminate} object, so $V=\mathcal{C}$ wouldn't be very informative. By contrast, if our Universist interpreted the $\mathcal{L}_{\in}$-translate of `the core exists' as suggesting that $V$=Ultimate-$L$, then she would be able to \textit{determinately} fix the features of the core, and would get $V=\mathcal{C}$ as a bonus. To be more precise, in the presence of such an interpretation, $\mathsf{ZFC}+$LCs+$V$=Ultimate-$L$ would prove that CH, as expressed in $\mathcal{L}_{\mathsf{\in}}$, is equivalent to $t($`CH holds in the core'), 
and this strategy may be extended to all $\varphi$'s in $\mathcal{L}_{\mathsf{\in}}$ which aren't decided by $T= \mathsf{ZFC}+$LCs.


So far so good, but is this strategy really workable? In fact, as we would like to point out, there may be severe hindrances in the way of its execution. First of all, if one looks at the functioning of (Transl), the $\mathcal{L}_{\mathsf{MV}}$-sentence `the core exists', doesn't literally translate to the $\mathcal{L}_{\in}$-sentence `$V$ is the core', let alone to `$V$=Ultimate-$L$' (see Definition \ref{Transl}). Therefore, some less quick, and more subtle kind of reasoning might underlie the adoption of $V=\mathcal{C}$ on the Universist's part, and this may have to do with the definability of the \textit{core} in $\mathsf{MV}$. As we know, if $\mathsf{MV}$ has a definable world, then it has a unique definable world (cf. Theorem \ref{defworld}). Now, we know that the core is definable, and since there is just \textit{one} such object in $\mathsf{MV}$, $\mathsf{MV}$'s unique definable world is the core itself. Therefore, the Core Universist might thrive on considerations about the \textit{definability} of the core within $\mathsf{MV}$, and see these as strongly suggesting that $V=\mathcal{C}$. But even if this argument ultimately persuaded us to adopt $V=\mathcal{C}$, it wouldn't be sufficient to persuade us to adopt $V$=Ultimate-$L$, anyway.


For a different strategy, we could think that what we'd need here to make the $\mathcal{L}_{\mathsf{\in}}$-translate of the $\mathcal{L}_{\mathsf{MV}}$-sentence `the core exists' suggest that `$V$=Ultimate-$L$', would be some strengthening of $T$ in the theory $\mathsf{MV}_T$. As we shall see, unfortunately, all such strengthenings will produce results which are not fully satisfactory. 

Two main cases are possible. First, take $T$ to be theory $\mathsf{ZFC}$+LCs+$V$= Ultima-te-$L$, and consider $\mathsf{MV}_T$. The theory could, potentially, do the required job, but it is inconsistent. The reason is, any model of such theory would violate Axiom 4 of $\mathsf{MV}$, as no forcing extension of $\mathsf{MV}$ could be a world satisfying $T$. 
A more modest strengthening of $T$, on the other hand, might suit our purposes. Take $T$ to be: $\mathsf{ZFC}$+LCs+`$\mathcal{C}\models V$=Ultimate-$L$', then the $\mathcal{L}_{\mathsf{\in}}$-translate of `the core is Ultimate-$L$' would be a lot closer to what we'd need to have at hand, but, still, the translation wouldn't automatically imply `$V$ is the core'. But there's also another trouble with this choice of $T$: the axiom `$\mathcal{C}\models V$=Ultimate-$L$' wouldn't, \textit{prima facie}, seem to be justified on the grounds of $\mathsf{MV}$'s very evidential framework, so the only reason why one would add it to $T$ in $\mathsf{MV}_T$ would have to do with other, `external', reasons, such as, (1.)-(3.) above, or our prior belief in the correctness of $V$=Ultimate-$L$, but now relying on such reasons, somehow, would beg the question of why the Core Universist should adopt $V$=Ultimate-$L$. 

But maybe there are ways to make `$\mathcal{C}\models$V=Ultimate-$L$' justified on the grounds of $\mathsf{MV}$'s very evidential framework. We proceed to examine arguments potentially to that effect in the next subsections. 

\subsection{Woodin's Argument for Ultimate-$L$}

The first argument derives from Woodin's own reflections on the philosophical aspects of Conjecture \ref{WoodinUlt} (Ultimate-$L$ exists).

In \cite{woodin2011b}, Woodin has proposed to construe the Inner Model Programme as the expression of the fundamentally \textit{non-formalistic} character of set theory. Woodin's Argument, as we shall call it, first introduces two different positions about set-theoretic truth, the Skeptic's and the Set-Theorist's. 

\begin{itemize}
    \item \textit{Skeptic}: set-theoretic theorems are truths about finitary objects (\textit{proofs}).

    \item \textit{Set-theorist}: set-theoretic theorems are truths about an \textit{existing} realm of mathematical objects. 
\end{itemize}

Now, Woodin argues that the dialectic between the Skeptic and the Set-Theorist reaches its climax on the issue of the \textit{consistency} of LCs: the Skeptic holds that the consistency of LCs is a purely \textit{finitistic} fact (Skeptic's Retreat), whereas the Set-Theorist believes that it depends on intuitions about universes which contain them, in particular, \textit{canonical inner models}. In order to further articulate the Set-Theorist's position, Woodin formulates the following principle:

\vspace{11pt}

\noindent
\textbf{Set-Theorist's Cosmological Principle (SCP)}. A Large Cardinal Axiom is \textit{consistent} if and only if there is an inner model which satisfies it; the prediction of its consistency is true, because LCAs are \textit{true}.

\vspace{11pt}

\noindent
Woodin also indicates concrete ways in which SCP might be disconfirmed and the Skeptic's Retreat be validated, for instance, by finding a \textit{proof} that the consistency (inconsistency) of one specific LCA, implies the consistency (or inconsistency) of \textit{all} LCAs.\footnote{This would be the case, as shown by Woodin, if one found a proof of the consistency of $\mathsf{ZF}$+`there exist Reinhardt cardinals', as the consistency of such theory would, in turn, imply the consistency of $\mathsf{ZFC}$+`there is a proper class of $\omega$-huge cardinals'. The latter, in turn, implies all LCA known to be consistent with AC, so by a single proof, we would have a proof of the consistency of all LCs compatible with AC; cf. \cite{woodin2011b}, pp. 456-57.} Woodin then proceeds to make considerations about the possibility that, if the Ultimate-$L$ Conjecture is true, then we may have a situation where the scenario evoked by the Skeptic's Retreat is not applicable.\footnote{If Ultimate-$L$ exists, then we would know what large cardinals are and what are not in it, and, thus, by the SCP, we would know what large cardinals are consistent (as Ultimate-$L$ is a generalisation of all inner models for each single LC). The prediction of the consistency of all LCs based on inner models would thus reduce to the prediction of the existence of an ultimate core model which has all of them, and which would overall provide evidence for a single prediction of consistency of all LCs; cf. \cite{woodin2011b}, p. 463.}

We shall entirely leave aside the issue of the plausibility of Woodin's Argument, and exclusively focus on the potential usefulness of the argument for our Core Universist. The argument revolves around the idea that our belief in the correctness of \textit{all} LCs is based on the belief that they are consistent, in turn, via SCP, that they have \textit{inner models}. Thus, Woodin's Argument practically suggests that, if one sees LCs as the `right' extensions of $\mathsf{ZFC}$, it is because one views them as consistent via \textit{inner models}. So, the reasoning goes, once one commits oneself to \textit{all} LCs, as the $\mathsf{MV}$ supporter does, then, there is a (thick) sense in which one also commits oneself to Ultimate-$L$, that is, to Conjecture \ref{WoodinUlt}. But then, if Ultimate-$L$ exists, it should be a \textit{definable} world and, thus, based on Theorem \ref{defworld}, it would be the \textit{core}; the correctness of `$\mathcal{C} \models$V=Ultimate-$L$' would thus finally be vindicated, seemingly, on $\mathsf{MV}$'s own evidential grounds. 


However, the argument has two main problems for our $\mathsf{MV}$-based Core Universist.

The first one is that the evidential resources it invokes (truth, consistency predictions, etc.) are not, in fact, available to her. Steel's conception prides itself on not being dependent on any `metaphysics' of universes. Therefore, intuitions about the structure of models with LCs, which underlie Woodin's Argument and SCP, if argumentatively efficacious, may not bear on our Core Universist's acceptance of `$\mathcal{C}\models$Ultimate-$L$'. 

The other one is that, by using Woodin's Argument in the present context, then the Core Universist would become practically indistinguishable from the Classic Universist who supports $V$=Ultimate-$L$: even coming from slightly different backgrounds, both would, indeed, agree on the fact that there is one \textit{universe} of set theory (Ultimate-$L$) which is preferable to all others from the beginning, and it does not seem that the $\mathsf{MV}$ supporter's evidential framework would commit her to such a view.  

Let us now move on to explore another potential justificatory strategy. 

\subsection{An Extrinsic Argument}

A different argument is based on resuming Steel's reasons to adopt $V$=Ultimate-$L$ (1.)-(3.) stated in section 5.2: these, overall, suggest to the Core Universist that $V$=Ultimate-$L$ is a very successful axiom, which could be seen as being \textit{already} justified `extrinsically' -- insofar as practically \textit{all} undecidable statements are settled by it.  

More precisely, the Core Universist might use a form of `regressive' reasoning here: she will require $\mathsf{MV}_T$ to incorporate `$\mathcal{C} \models$V=Ultimate-$L$' in $T$, because she takes `$V$=Ultimate-$L$' to be an \textit{already} independently justified axiom. 

However, in section 5.2, we have already hinted at the inherent limitations of this strategy: $\mathsf{MV}$ would no longer be used as a means to indicate what bits of $\mathcal{L}_{\in}$ ought to be believed to be `meaningful' (to legitimately hold); on the contrary, it would be $\mathcal{L}_{\in}$ to provide us information about what specific $T$ should be chosen in $\mathsf{MV}_T$. In other terms, the Core Universist wanting to use this `extrinsic' argument would, ultimately, violate her own unbiasedness about all different theories expressed by $\mathsf{MV}$.  

Another, more general, concern is that $V$=Ultimate-$L$ isn't the only successful axiom she has at hand. In particular, since she knows that the core need not be Ultimate-$L$, and that $T$ in $\mathsf{MV}_T$ is consistent with  `$\mathcal{C} \not \models$V=Ultimate-$L$' (Theorem \ref{nUlt-L}), the Core Universist could ultimately settle on other, equally successful, axioms for the core. For instance, she might want to adopt `$\mathcal{C} \models$ MM$^{++}$', as MM$^{++}$ implies that $c=\aleph_2$, and is clearly able to settle many other undecidable statements. 

One final worry about the argument, which is worth mentioning, is that `success' wasn't really part of Steel's narrative concerning $\mathsf{MV}$ from the beginning, although, clearly, LCs may be interpreted as being very `successful' axioms. We do not want to delve into the full intricacies of the topic, but `success' may really be a very volatile criterion, which, although helpful, may not lead the Core Universist to make ultimate choices about the nature of $\mathsf{MV}$. 

\subsection{Summary}

Let's take stock. The progression from $\mathsf{ZFC}$ to $T=\mathsf{ZFC}$+LCs+`$\mathcal{C} \models$V=Ultimate-$L$', summarised in the table below, shows that ascending through \textit{interpretative power} (and \textit{consistency strength}) of theories, is, presently, insufficient to suggest to the Core Universist that $V$ is the core, or that $V$ is any specific `world', for instance, Ultimate-$L$. As we have seen, on the one hand, by adding further hypotheses to $T$ in $\mathsf{MV}_T$, one may get an inconsistent theory; on the other, one could make choices, such as the addition of `$\mathcal{C} \models$V=Ultimate-$L$', which, however, on $\mathsf{MV}$'s own evidential grounds, do not seem to be much justified. This, overall, leaves us with the following, somewhat unpalatable, dilemma: either to stay with $T=\mathsf{ZFC}$+LCs, in $\mathsf{MV}_T$ and, thus, view the core as fundamentally indeterminate, or move to a, globally, less justified strengthening of $T$, but finally get a \textit{determinate} core.  

\medskip

\begin{table}[h]

\footnotesize

\begin{tabular}{||c c c c||} 
 \hline
 Theory & Core (Existence) & Core (Determinacy) & Suggests $V=\mathcal{C}$ \\ [0.5ex] 
 \hline
 ZFC & No & / & /\\
 \hline
 ZFC+LCs & Yes & No & Yes?\\
 \hline
 ZFC+LCs+$V$=Ultimate-$L$ & \textbf{Inconsistent!} &  & \\
 \hline
 ZFC+LCs+`$\mathcal{C} \models V$=Ultimate-$L$' & Yes & Yes & Yes?\\
 \hline
 ... & ... & ... &...\\ [1ex] 
 \hline

\end{tabular}

\caption{Behaviour of $\mathsf{MV}_T$ with respect to the core for different choices of $T$.}

\end{table}

\medskip

\normalsize

\section{Concluding Remarks and Ways Forward} 

Based on what one can prove in the theory $\mathsf{MV}_T$, where $T=\mathsf{ZFC}$+`there exists a proper class of extendible cardinals', and further potential strengthenings, we have assessed the prospects of what we have called Steel's Programme, and of the corresponding philosophical position that we have called Core Universism. 

Our tentative conclusion is that there might still be a long way to go before the view that the core is Ultimate-$L$ and, consequently, that $V$=Ultimate-$L$ is the ultimate axiom for the Core Universist, gets fully validated; meanwhile, as is clear, $V$=Ultimate-$L$ might ultimately be accepted on entirely different grounds. 


We conclude the paper with suggesting some possible future scenarios for the investigation of the `core hypothesis' within $\mathsf{MV}$: 

\vspace{11pt}

\noindent
\textbf{First Scenario}. Stronger, hitherto unknown, LCs will, `more determinately', settle the features of the core.

\vspace{11pt}

\noindent
The idea, here, is that further ascending through consistency strength in an ideally `richer' large-cardinal hierarchy will help settle the features of the core. Clearly, at present, nobody could possibly foresee whether there will be concrete developments in this direction, but note that Usuba's key theorem (Theorem \ref{usuba}) has set an interesting precedent for results in this area which were widely unexpected.\footnote{We thank an anonymous referee for pointing out that, if one new type of LCA implied that CH holds in the core of the multiverse of $\mathsf{MV}$, then it would already directly imply CH, but this fact, then, would rather help advocate Classic than Core Universism.}

\vspace{11pt}

\noindent
\textbf{Second Scenario}. The `core hypothesis' will ultimately be taken to just be a theoretical tool to foster and study the interplay between \textit{multiverse} and \textit{universe thinking} while dealing with set-theoretic incompleteness. 

\vspace{11pt}

\noindent
The scenario above construes Steel's efforts as going in the direction of clarifying, not prescribing solutions for, the issues of whether there is a core universe and of what the core should be like. In this scenario, the core hypothesis would not, \textit{per se}, suggest a unique course of action for Universists, yet may be taken to be informative on what resources and additional hypotheses are needed if one wants to settle the undecidable statements.\footnote{This interpretation is, arguably, already inherent in Steel's declared goal of using $\mathcal{L}_{\mathsf{MV}}$ `to trim the syntax of $\mathcal{L}_{\in}$' and `thus avoiding asking pseudo-questions'. Cf. \cite{steel2014}, p. 168.}

\vspace{11pt}

\noindent
\textbf{Third Scenario}. The issue of the existence of a `preferred universe' will be declared to be \textit{insoluble} on purely proof-theoretic grounds (that is, by just focussing on the relationship between \textit{multiverse language} and \textit{language of set theory}), whilst further conceptual \textit{resources} will ultimately be seen as fundamental to solve it.

\vspace{11pt}

\noindent
As we have seen, Woodin's Argument, for instance, provides us with alternative resources to solve the issue of what counts as a `preferred' universe. Now, one could conjecture that it will turn out to be inevitable to resort to this kind of arguments to meet the Core Universist's requirements.

In the paper, we have also addressed and, by our lights, made more transparent, several features, both mathematical and philosophical, of $\mathsf{MV}$: in particular, the proof of the existence of the core, its persistent indeterminacy over different choices of $T$ for $\mathsf{MV}_T$, the mathematics of Ultimate-$L$ and $\mathsf{MV}$, the justifiability and role of LCs. Now, further mathematical work is, already at this stage, needed in order to further assess $\mathsf{MV}$'s hypotheses. In particular, ideally, much more should be known about:

\begin{itemize}
    \item The status of the Ultimate-$L$ Conjecture
    
    \item The prospects of a unified account of LCs 
    
    \item A better understanding of the notion of `canonicity' with respect to (alternative) models of set theory
\end{itemize}

We expect that further enlightenment on these issues will also carry with itself a more detailed understanding of what the Core Universist may legitimately claim, based on the structure of the multiverse. But, as we have seen, definite answers to many issues concerning $\mathsf{MV}$, some confirming, others disconfirming, the Core Universist's expectations, can already be provided.    

\pagebreak

\bibliographystyle{apalike} 
\bibliography{Bib1}

\pagebreak

\tableofcontents

\end{document}